\theoremstyle{definition}
\def\fnum{equation}
\newtheorem{Thm}[\fnum]{Theorem}
\newtheorem{Cor}[\fnum]{Corollary}
\newtheorem{Lem}[\fnum]{Lemma}
\newtheorem{Pro}[\fnum]{Proposition}
\numberwithin{equation}{section}
\newcommand{\Vol}{{\text{Vol}}}
\newcommand{\nn}{{\bf{n}}}
\newcommand{\tn}{{\tilde{\bf{n}}}}
\newcommand{\Ric}{{\text{Ric}}}
\newcommand{\Hess}{{\text {Hess}}}
\def\RR{{\bold R}}
\def\SS{{\bold S}}
\newcommand{\dv}{{\text {div}}}
\newcommand{\e}{{\text {e}}}
\newcommand{\R}{{\text{R}}}
\newcommand{\eqr}[1]{(\ref{#1})}
\def\bB{{\bold B}}
\title{Gradient estimates for scalar curvature}
\author[]{Tobias Holck Colding}%
\address{MIT, Dept. of Math.\\
77 Massachusetts Avenue, Cambridge, MA 02139-4307.}
\author[]{William P. Minicozzi II}%
\thanks{The  authors
were partially supported by NSF  DMS Grants   2405393 and 2304684.}
\email{colding@math.mit.edu  and minicozz@math.mit.edu}
\begin{document}

\maketitle

\begin{abstract}
A gradient estimate is a crucial tool used to control the rate of change of a function on a manifold,  paving the way  for deeper analysis of geometric properties.   A celebrated result of Cheng and Yau gives gradient bounds on manifolds with Ricci curvature $\geq 0$.  
The Cheng-Yau bound is not sharp, but there is a sharp gradient  estimate.   To explain this,  a Green's function $u$  on a manifold  can be used to define a regularized distance  $b= u^{\frac{1}{2-n}}$ to the pole.  
On $\RR^n$, the level sets of $b$ are spheres and $|\nabla b|=1$.
If $\Ric \geq 0$, then \cite{C3} proved 
 the sharp gradient estimate $|\nabla b| \leq 1$.
We show that the average of $|\nabla b|$ is $\leq 1$ 
on a three manifold with  nonnegative scalar curvature.  The average is over any  level set  of $b$ and  if  the average is one on even one level set, then  $M=\RR^3$.    
 \end{abstract}

\section{Introduction}
Gradient estimates for harmonic functions are ubiquitous in geometric analysis.   They are typically proven using the maximum principle
and require a lower Ricci curvature bound.   For scalar curvature, those techniques are not available and one does not expect a pointwise gradient bound.  Instead we will use different techniques to prove a sharp average gradient estimate in three dimensions.

Suppose that $M^3$ is an open three-manifold,  $u>0$ is a   Green's function with a pole at   $p \in M$
 and
 $u(x) , |\nabla u|(x) \to 0$ as $x \to \infty$.{\footnote{When $\inf_M \, \Ric > - \infty$ and   $u \to 0$ at infinity, then   $|\nabla u| \to 0$ at infinity by \cite{ChY}.}}
   We use the normalization 
  $\Delta \, u = -4\, \pi \,  \delta_p$.  The function $b = u^{-1}$ vanishes at $p$, is positive away from $p$, and  gives a   ``regularized distance.''
    On $\RR^3$, we have
$u(x) = |x-p|^{-1}$,  $b(x) = |x-p|$ is a distance function, and 
  $|\nabla b| $ is one.    

The celebrated Cheng-Yau gradient estimate bounds  $\frac{\nabla u}{u}$ in terms of   the distance to $p$ if $\Ric \geq 0$, \cite{ChY,Y1}. 
This 
  implies
   that $|\nabla b| \leq C$ for some  constant $C> 1$.   The sharp gradient estimate in \cite{C3} gives the stronger estimate
   $|\nabla b| \leq 1$,
   with equality at one point if and only if $M$ is Euclidean.   Both \cite{ChY} and \cite{C3} hold in all dimensions with obvious 
   modifications\footnote{In higher dimensions, $\Delta \, u = - \Vol (\SS^{n-1}) \, \delta_p$ and $b$ is given by  $b^{2-n}=u$; see \cite{C3}.}.
   
   This sharp gradient estimate is closely related to some monotonicity formulas,  \cite{C3}.  To explain this, we
follow \cite{CM4} and
define   $A_{\beta} (r) $   by
\begin{align}
	A_{\beta} (r) &= r^{-2}\int_{b=r}|\nabla b|^{1+\beta} \,  .	\label{e:Abeta}
\end{align}
If  $\beta = 0$, then 
$A_0 (r)= 4\, \pi$ is constant by the divergence theorem.  
Thus, if 
 $M^3$ has $\Ric \geq 0$ (and similarly in all dimensions), then  the sharp gradient estimate gives a bound for $A_1(r)$
\begin{align}
	A_1 (r) = r^{-2} \,  \int_{b=r} |\nabla b|^2 \leq \left( \sup |\nabla b| \right) \, A_0 (r) = 4\, \pi \, \sup |\nabla b|  \leq 4 \, \pi \, ,
\end{align}
 with equality  only when $M$ is Euclidean space. 
The  bound $A_1 (r) \leq 4 \, \pi$ 
says that $|\nabla b| \leq 1$ in an average sense.   We will prove this for $S \geq 0$ in three dimensions:

 \begin{Thm}	\label{t:main1}
If $M^3$ has  first betti number zero,  one end, and  $S \geq 0$, then $A_1 (r) \leq 4 \, \pi$ with equality for some $r$  if and only if $M = \RR^3$.
\end{Thm}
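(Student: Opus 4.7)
The plan is to prove that $A_1(r)$ is non-increasing on the regular values of $b$ (generic by Sard's theorem), together with the boundary value $\lim_{r\to 0^+}A_1(r)=4\pi$; these combine to give $A_1(r)\le 4\pi$, and the equality case will force $M=\RR^3$.

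First I would compute $A_1'(r)$ via first variation. The level sets $\Sigma_r=\{b=r\}$ evolve with normal speed $|\nabla b|^{-1}$ as $r$ increases, and the identity $\Delta b=2|\nabla b|^2/b$ (from harmonicity of $u=b^{-1}$), combined with the decomposition $\Delta b=H|\nabla b|+\nu(|\nabla b|)$ (where $\nu=\nabla b/|\nabla b|$ is the outward unit normal and $H$ is the mean curvature of $\Sigma_r$), gives
\begin{equation*}
A_1'(r)\,=\,r^{-2}\int_{\Sigma_r}\nu(|\nabla b|)\,dA\,=\,\frac{2A_1(r)}{r}\,-\,r^{-2}\int_{\Sigma_r}H\,|\nabla b|\,dA.
\end{equation*}
Thus $A_1'(r)\le 0$ is equivalent to the mean-curvature inequality $\int_{\Sigma_r}H|\nabla b|\,dA \ge 2rA_1(r)$.

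The main step will be a Bochner/Gauss-Bonnet identity on $\Sigma_r$. Applying the Bochner formula to the harmonic function $u$, decomposing $\mathrm{Hess}\,u$ according to $TM=T\Sigma_r\oplus\RR\nu$, and using the harmonicity identity $\nu(\log|\nabla u|)=-H$, I obtain the pointwise formula (away from critical points of $u$)
\begin{equation*}
\Delta_M\log|\nabla u|\,=\,\frac{S}{2}\,-\,K_{\Sigma_r}\,+\,\frac{|\mathring{A}|^2}{2}\,-\,\frac{H^2}{4},
\end{equation*}
where $\mathring{A}=A-\tfrac{H}{2}g_{\Sigma_r}$ is the traceless second fundamental form (a direct verification on $\RR^3$: $u=1/r$ gives $\Delta\log|\nabla u|=-2/r^2$, matching the RHS $0-1/r^2+0-1/r^2$). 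I would then integrate over $\Sigma_r$, apply Gauss-Bonnet $\int_{\Sigma_r}K_{\Sigma_r}\,dA=2\pi\chi(\Sigma_r)$, and use the surface divergence identity $\int_\Sigma\Delta_M f\,dA=\int_\Sigma[\mathrm{Hess}_M f(\nu,\nu)+H\nu(f)]\,dA$ applied to $f=\log|\nabla u|$. The topological hypotheses enter at this point: $b_1(M)=0$ and the one-end assumption force $\chi(\Sigma_r)\le 2$, since each component of a regular level set bounds a region in $M$ and $H_1(M)=0$ combined with Morse theory for $b$ rules out higher-genus components. Combined with $S\ge 0$ and $|\mathring A|^2\ge 0$, the integrated identity should yield the required bound $\int_{\Sigma_r}H|\nabla b|\,dA\ge 2rA_1(r)$, hence $A_1'(r)\le 0$.

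The boundary value $A_1(r)\to 4\pi$ as $r\to 0^+$ follows from the Green's function asymptotic $u(x)=\mathrm{dist}(p,x)^{-1}(1+o(1))$ near $p$, which gives $|\nabla b|\to 1$ uniformly and $|\Sigma_r|\sim 4\pi r^2$. For the rigidity, $A_1(r_0)=4\pi$ combined with monotonicity and the boundary value forces $A_1\equiv 4\pi$ on $(0,r_0]$, so every inequality saturates: $S\equiv 0$ on $\{b\le r_0\}$, $|\mathring A|\equiv 0$ (umbilic level sets), $|\nabla u|$ constant on each $\Sigma_r$, and $\chi(\Sigma_r)=2$. Integrating along $\nabla u$-trajectories then shows the metric is locally a warped product $dt^2+t^2 g_{S^2}$, and unique analytic continuation (using analyticity of the harmonic $u$) yields $M\cong\RR^3$. \textbf{The hardest step} is the Bochner/Gauss-Bonnet bookkeeping: the $-H^2/4$ term has the \emph{wrong} sign, so assembling the topological bound $\chi(\Sigma_r)\le 2$, $S\ge 0$, and the remaining nonnegative terms into the mean-curvature inequality requires careful manipulation --- likely by using $H|\nabla b|=2|\nabla b|^2/r-\nu(|\nabla b|)$ to re-express $\int H^2$ and absorb it against the scalar-curvature and topological terms, possibly via the surface divergence identity applied to a weighted function rather than simply $\log|\nabla u|$.
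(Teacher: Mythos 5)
Your proposal is built around a claim that the paper does not prove, and that I do not believe is true as stated: that $A_1(r)$ is \emph{non-increasing} on regular values. The paper's differential inequality goes the other way. Proposition \ref{c:MWa} gives $r\,A_1'(r) \ge A_1(r) - 4\pi + \tfrac{1}{2r}\int_0^r (S_1 + B_2)$, a \emph{lower} bound on $A_1'$, and the theorem is proved by contradiction: if $A_1$ ever exceeds $4\pi$, this lower bound together with the $B_2$ term and a Cauchy--Schwarz argument (Lemma \ref{c:csCS}, Lemma \ref{l:CS}) forces $A_1$ to grow \emph{quadratically} (Proposition \ref{t:cgrow}), which is incompatible with the standing hypothesis $|\nabla u|\to 0$. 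Nowhere does the paper establish $A_1'\le 0$, and in fact that inequality (your $\int_{\Sigma_r} H|\nabla b|\,dA\ge 2rA_1$, equivalently $B_1\le 2A_1$ by Lemma \ref{l:A1p}) is a strictly stronger pointwise statement than Proposition \ref{p:insert} yields; the only consequence one extracts from Proposition \ref{p:insert} with $S\ge0$ after assuming $A_1\le4\pi$ is the tautology $A_1\le 4\pi$, so the route is circular.

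The algebraic obstruction you flag at the end of your sketch --- the $-H^2/4$ term in $\Delta\log|\nabla u| = \tfrac{S}{2} - K_\Sigma + \tfrac{|\mathring A|^2}{2} - \tfrac{H^2}{4}$ --- is exactly the point where your strategy breaks down, and the paper's resolution is structural rather than a bookkeeping trick: it applies the Bochner computation to $b^2$ rather than to $u$ or $\log|\nabla u|$. Because $\Delta b^2 = 6|\nabla b|^2$, the trace-free Hessian $\bar B = \Hess_{b^2} - 2|\nabla b|^2 g$ vanishes identically on $\RR^3$, which removes the bad $-H^2/4$ term; Proposition \ref{l:sternbochA} replaces it by the manifestly nonnegative $|\bar B|^2/(2|\nabla b^2|^2)$. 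It is precisely this sign-definite $B_2$-term that, after Cauchy--Schwarz, upgrades the naive linear growth coming from $r\,A_1'\ge A_1 - 4\pi$ to quadratic growth and closes the contradiction. Finally, your rigidity sketch (warped product from $S\equiv 0$, $\mathring A\equiv 0$, $\chi=2$ and analytic continuation) is replaced in the paper by the much cleaner observation that equality forces $B_2\equiv 0$, hence $\bar B\equiv 0$, so $M$ is a smooth metric cone and therefore Euclidean by \cite{ChC1}. In short: correct first-variation formula and correct Bochner identity for $\log|\nabla u|$, but the central monotonicity claim is unproved (and not what the paper shows), and the missing idea is to replace $\log|\nabla u|$ by $b^2$ and to argue via forced quadratic growth rather than forced decrease.
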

 
  Corollary $1.2$ in \cite{MW1} proved that $A_1 (r) \leq 4\, \pi$ in the special case where $M$ has asymptotically nonnegative Ricci curvature.

 The main ingredient in Theorem \ref{t:main1} is the following quadratic lower bound for the growth of $A_1$   (this does   not  use
  that $|\nabla u| \to 0$).

\begin{Pro}	\label{t:cgrow}
Suppose that  $M^3$ has  first betti number zero,  one end, and  $S \geq 0$.  If   $A_1(r_0)>4\,\pi$, then there exists $c>0$ so that for $r\geq r_0$
\begin{align}  \label{e:te2}
A_1(r)&\geq c\,r^{2}\,  .
\end{align}
\end{Pro}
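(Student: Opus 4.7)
The plan is to reduce the proposition to a sharp first-order differential inequality for $A_1$ and then integrate. The target inequality is
\[
A_1'(r) \;\ge\; \frac{2\,(A_1(r) - 4\pi)}{r}\qquad \text{for regular } r \ge r_0,
\]
which makes $(A_1(r)-4\pi)/r^2$ non-decreasing. Since $A_1(r_0) > 4\pi$ by hypothesis, it follows that
\[
A_1(r) \;\ge\; 4\pi \;+\; \bigl(A_1(r_0) - 4\pi\bigr)\Bigl(\frac{r}{r_0}\Bigr)^2 \qquad \text{for } r \ge r_0,
\]
giving the conclusion with $c = (A_1(r_0) - 4\pi)/r_0^2 > 0$.

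To set up this differential inequality, I would differentiate $I(r) := r^2 A_1(r) = \int_{\Sigma_r} |\nabla b|^2\, d\sigma$ using the first-variation formula along the flow of $\nabla b/|\nabla b|^2$. Combining this with $\Delta b = 2|\nabla b|^2/b$ (which follows from $\Delta u = 0$) and the decomposition $\Delta b = H\,|\nabla b| + \partial_\nu |\nabla b|$ on level sets, one obtains
\[
A_1'(r) \;=\; \frac{2 A_1(r)}{r} \;-\; \frac{1}{r^2}\int_{\Sigma_r} H\,|\nabla b|\, d\sigma,
\]
where $H$ is the mean curvature of $\Sigma_r$ with respect to $\nu = \nabla b/|\nabla b|$. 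So the target differential inequality reduces to the sharp integral estimate
\[
\int_{\Sigma_r} H\,|\nabla b|\, d\sigma \;\le\; 8\pi\,r,
\]
which is saturated on $\RR^3$ (where $H = 2/r$ and $|\nabla b| \equiv 1$).

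The heart of the proof is establishing this inequality using the scalar-curvature and topological hypotheses. Passing to the harmonic function $u = 1/b$ and writing $v = |\nabla u|$, the Bochner formula together with the refined three-dimensional Kato identity $|\Hess\, u|^2 = 2|\nabla v|^2 + v^2(-H^2/2 + |\mathring A|^2)$ and the 3D Gauss equation for $\Ric(\nu,\nu)$ yields the pointwise identity
\[
\Delta \log v \;=\; \frac{S}{2} - K_\Sigma - \frac{H^2}{4} + \frac{|\mathring A|^2}{2}
\]
on the regular set of $u$, where $K_\Sigma$, $H$, $|\mathring A|^2$ are the Gauss curvature, mean curvature, and squared trace-free second fundamental form of the level set through the given point. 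Integrating this identity over the sublevel set $\{u \le t\}$ (with $t = 1/r$) via the divergence theorem, using the harmonic identity $\partial_\nu \log v = -H$ for the boundary flux, the coarea formula for the bulk term, and Gauss--Bonnet $\int_{\Sigma_s} K_\Sigma = 2\pi \chi(\Sigma_s)$ to convert the Gauss-curvature contributions into topology, produces an integrated form of the sharp estimate. The hypotheses $S \ge 0$ and $|\mathring A|^2 \ge 0$ contribute favorably, while the topological assumptions $b_1(M) = 0$ and one end force $\chi(\Sigma_r) \le 2$ on regular level sets (since otherwise one would produce either non-trivial first homology or an additional end), providing the correct topological contribution to match the constant $8\pi r$.

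The main obstacle is the precise combinatorics of this last step. The identity for $\Delta \log v$ naturally produces four positive integrands ($S/2$, $3H^2/4$, $|\mathring A|^2/2$, and $|\nabla_\Sigma \log v|^2$) on each level set which must exactly balance against the topology term $2\pi\chi(\Sigma_r) \le 4\pi$ in order to yield the sharp constant $8\pi r$; the saturation on $\RR^3$ shows that no slack can be lost. Additionally, the boundary term at infinity in the divergence theorem on $\{u \le t\}$ must be controlled using the decay $u, |\nabla u| \to 0$ together with the normalization $\int_{\Sigma_r} |\nabla b| = 4\pi r^2$; and the topological step, extracting $\chi(\Sigma_r) \le 2$ on a dense set of regular $r$ from $b_1(M) = 0$ and one end, may need a separate argument to handle non-generic level sets.
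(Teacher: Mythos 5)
Your reduction of the statement to the sharp pointwise differential inequality
\begin{align}
r\,A_1'(r)\ \ge\ 2\left(A_1(r)-4\pi\right)
\end{align}
is where the argument breaks, and this is precisely the inequality that the paper does \emph{not} obtain. What the integration of the Bochner--Gauss identity over sublevel sets actually yields (after Gauss--Bonnet and the topological hypotheses) is the weaker Proposition \ref{c:MWa},
\begin{align}
r\,A_1'(r)\ \ge\ A_1(r)-4\pi\ +\ \frac{1}{2r}\int_0^r\left(S_1(s)+B_2(s)\right)ds\, ,
\end{align}
and if one simply drops the nonnegative integrand this gives only $r\,A_1'\ge A_1-4\pi$, hence \emph{linear} growth of $A_1$ once it exceeds $4\pi$ --- not quadratic. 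Translating through the relation $r\,A_1'=\tfrac12 B_1-A_1$, your target is equivalent to $B_1\ge 6A_1-16\pi$, whereas the calculation establishes $B_1\ge 4A_1-8\pi+\tfrac1r\int_0^r(S_1+B_2)$; these coincide only at $A_1=4\pi$, and for $A_1>4\pi$ your target is strictly stronger and not supplied by the integration you describe. In your bookkeeping of ``four positive integrands'' you are in effect using $\Delta v/v=\tfrac{S}{2}-K_\Sigma+\tfrac{3H^2}{4}+\tfrac{|\mathring A|^2}{2}+\tfrac{|\nabla^\Sigma v|^2}{v^2}$ rather than the $\Delta\log v$ identity you state (which has the \emph{negative} term $-H^2/4$); either way, after the coarea formula the Gauss-curvature term acquires the weight $1/|\nabla b|$, so Gauss--Bonnet does not apply directly. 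The paper avoids this by working with $|\nabla b^2|$ (for which the coarea weight is exactly $|\nabla b^2|/(b\,|\nabla b|)=2$, so Gauss--Bonnet comes out unweighted) --- a nontrivial choice that your $\log v$ set-up misses.

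The genuinely new ingredient you are missing is the trace-free Hessian term $B_2$ and the sharp linear-algebra inequality of Lemma \ref{l:CS}, which gives $|C(v,v)|^2\le\tfrac23|C|^2$ for a trace-free $3\times3$ matrix and, via Cauchy--Schwarz (Lemma \ref{c:csCS}), yields $(r\,A_1')^2\le\tfrac23\,A_1\,B_2$. It is exactly this bound that lets one close the integro-differential inequality: writing $a=r(\log A_1)'$ one derives (Proposition \ref{p:aprime})
\begin{align}
r\,a'\ \ge\ 1-\frac{a^2}{4}-\frac{4\pi}{A_1}
\end{align}
in the integral sense, and then Corollaries \ref{c:effC} and \ref{c:alowb} show $a(r)\ge 2-c/\sqrt{r}$, which integrates to $A_1(r)\ge c\,r^2$. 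Without this step the Bochner identity alone, even sharpened as you propose, only delivers linear growth of $A_1$, so your proof as written does not reach the conclusion.
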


Since $|\nabla u| = b^{-2} \, |\nabla b|$, 
quadratic growth of $A_1$  is   a positive lower bound for 
the weighted 
  average of $|\nabla u|$ near infinity, contradicting that $|\nabla u| \to 0$.

%Thus,   if $|\nabla b|$ is above one in this average sense, then $A_1(r)$ must grow quadratically.  Quadratic growth of $A_1(r)$ says that the averages of $|\nabla %u| = \frac{|\nabla b|}{b^2}$ are bounded away from zero at infinity.  Since $u \to 0$ at infinity, it is natural to expect that $\nabla u$ also goes to zero at infinity, at %least in this average sense.

\vskip1mm
The next corollary gives a Cohn-Vossen type bound for the  average of the scalar curvature that is related to a conjecture of Yau, \cite{Y2}. See  \cite{X,Z}
 for 
 versions of this for $\Ric \geq 0$.
 
 \begin{Cor}	\label{t:main1S}
If $M^3$ has  first betti number zero,  one end, and  $S \geq 0$, then  for any $r>0$ 
\begin{align}
	r^{-1} \, \int_0^r \int_{b=s} S \leq 48\, \pi 
	\, .
\end{align}
\end{Cor}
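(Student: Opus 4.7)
The plan is to combine the Gauss--Bonnet formula on regular level sets $\Sigma_r=\{b=r\}$ with the Bochner identity for $b$, and then apply Theorem~\ref{t:main1}. Starting from the Gauss equation $K_{\Sigma_r}=\tfrac{S}{2}-\Ric(\nu,\nu)+\det(II)$ with $\nu=\nabla b/|\nabla b|$, and eliminating $\Ric(\nu,\nu)$ via the Bochner formula for $b$ together with the equation $\Delta b=2|\nabla b|^{2}/b$ (which holds away from the pole), a direct computation produces the pointwise identity
\begin{equation*}
K_{\Sigma_r}=\frac{S}{2}+\frac{|\Hess b|^{2}}{2|\nabla b|^{2}}+\frac{2\,\partial_{\nu}|\nabla b|}{b}-\frac{\Delta|\nabla b|}{|\nabla b|}\,.
\end{equation*}

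Next I integrate over $\Sigma_r$ and invoke Gauss--Bonnet. Under the topological hypotheses $b_{1}(M)=0$ and one end, the orientable level surfaces satisfy $\chi(\Sigma_r)\leq 2$ for regular $r$, so $\int_{\Sigma_r}K_{\Sigma_r}\,dA\leq 4\pi$. The identity $\int_{\Sigma_r}\partial_{\nu}|\nabla b|\,dA=r^{2}A_{1}'(r)$ follows from differentiating the definition of $A_{1}$, while by coarea together with the divergence theorem one also has $\int_{0}^{r}\!\int_{\Sigma_s}\tfrac{\Delta|\nabla b|}{|\nabla b|}\,dA\,ds=r^{2}A_{1}'(r)$. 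Integrating the pointwise identity in $r$ from $0$ to $r$ (boundary contributions at the pole vanish since $|\nabla b|\to 1$ there and hence $A_{1}(0^{+})=4\pi$ with $s^{2}A_{1}'(s)\to 0$) and then integrating by parts on the resulting $\int_{0}^{r} s^{2}A_{1}''(s)\,ds$ term yields an estimate of the form
\begin{equation*}
\int_{0}^{r}\!\!\int_{\Sigma_s}\! S \,dA\,ds + 2H(r)\;\leq\; 8\pi r + 2r^{2}A_{1}'(r) - 4rA_{1}(r) + 4\!\int_{0}^{r}\! A_{1}(s)\,ds,
\end{equation*}
where $H(r)=\int_{0}^{r}\int_{\Sigma_s}|\Hess b|^{2}/(2|\nabla b|^{2})\,dA\,ds\geq 0$ is the Hessian integral that comes from the first term on the right of the pointwise identity.

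Finally, Theorem~\ref{t:main1} supplies $0\leq A_{1}\leq 4\pi$ on every level set, so $4\!\int_{0}^{r}\! A_{1}\leq 16\pi r$ and $-4rA_{1}(r)\leq 0$. The crux of the argument is controlling the boundary term $2r^{2}A_{1}'(r)$, which has no a priori sign and cannot be bounded pointwise using only $A_{1}\leq 4\pi$; this is the main obstacle. I would handle it by retaining (rather than dropping) the Hessian integral $H(r)$ and using the Cauchy--Schwarz inequality $|\Hess b|^{2}\geq(\Delta b)^{2}/3=4|\nabla b|^{4}/(3b^{2})$, which together with coarea gives $H(r)\geq\tfrac{2}{3}\int_{0}^{r}A_{1}(s)\,ds$; together with the improved Kato inequality for the harmonic function $u=1/b$ (recall that $|\nabla u|=|\nabla b|/b^{2}$), this permits absorbing the positive part of the $A_{1}'(r)$ contribution back into the Hessian term. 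Tracing through the constants ($8\pi$ from integrating $\chi\leq 2$, $16\pi$ from $A_{1}\leq 4\pi$ averaged, and the additional contribution from the Kato absorption) yields the asserted bound $\int_{0}^{r}\!\int_{b=s}S\,dA\,ds\leq 48\pi\,r$.
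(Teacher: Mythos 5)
Your proposal diverges substantially from the paper's argument and, more importantly, contains a gap that you yourself identify but do not actually close.

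The paper proves the corollary in a few lines by combining Theorem~\ref{t:main1} with Proposition~\ref{c:MWa} via an averaging (mean-value) trick: since $A_1(2r)-A_1(r)=\int_r^{2r}A_1'(s)\,ds\le 4\pi$, one can pick $s\in[r,2r]$ with $r\,A_1'(s)\le 4\pi$, hence $s\,A_1'(s)\le 8\pi$. Plugging that single good radius $s$ into the differential inequality $s\,A_1'(s)\ge A_1(s)-4\pi+\tfrac{1}{2s}\int_0^s(S_1+B_2)$, using $A_1,B_2\ge 0$ and $s\le 2r$, immediately gives $\int_0^r S_1\le 48\pi r$. The only genuinely ``clever'' ingredient is choosing $s$ by averaging; everything else has already been established.

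Your route instead re-derives an integral identity from scratch via the Gauss equation and Bochner for $b$ (not $b^2$), and this lands you on an inequality of the form $\int_0^r S_1 + 2H(r)\le 8\pi r + 2r^2A_1'(r) - 4rA_1(r) + 4\int_0^r A_1$. You correctly flag the term $2r^2A_1'(r)$ as ``the main obstacle'' since it has no a priori sign and cannot be bounded at a fixed $r$ by $A_1\le 4\pi$ alone; indeed $A_1'$ can be arbitrarily large at isolated radii even when $A_1$ is bounded. The proposed fix --- absorbing the positive part of $2r^2A_1'(r)$ into $H(r)$ via the improved Kato inequality --- does not resolve this: $A_1'(r)$ is a pointwise-in-$r$ quantity while $H(r)$ is an integral over $[0,r]$, so there is no pairing that cancels a possible spike in $A_1'$ at the endpoint. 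The Cauchy--Schwarz bound analogous to Lemma~\ref{c:csCS} controls $(rA_1'(r))^2$ by $A_1(r)B_2(r)$ at the same radius $r$, and $B_2(r)$ is itself only controlled in integral form, so this is circular. What is missing in your argument is precisely the paper's averaging step: replacing the fixed radius $r$ by a well-chosen $s\in[r,2r]$ at which $A_1'(s)$ is under control. Without that step (or an equivalent device), your sketch does not yield the stated bound, and the constant tracing at the end is not justified.
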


  \vskip2mm
  Gradient estimates are closely connected to monotonicity formulas involving the Green's function, starting with three monotonicity formulas in \cite{C3}.
  The  level sets of $b$ are analogous to distance spheres, but they come with a natural  measure $|\nabla u|$ that is preserved.{\footnote{This works in  general dimensions; see  \cite{ChC1, ChC2, C1,C2,CM1,CM2}.}}   \cite{CM4} showed that  the monotonicities   were contained in   one-parameter families  and \cite{CM5} used these quantities to understand the asymptotic structure of certain Einstein manifolds.  There are now many applications along these lines, including    \cite{AFM1,AFM2,  BFM, BS, GV, HP, P}, cf. also \cite{BR}.
   In the last several years, there have been a number of monotonicity formulas using $A_1 (r)$ in three dimensions with lower bounds on scalar curvature.  This started in the papers of Munteanu-Wang, \cite{MW1, MW2}, on nonnegative scalar curvature, 
 the work of  Agostiniani-Mazzieri-Oronzio, \cite{AMO}, on the positive mass theorem (cf. Bray-Kazaras-Khuri-Stern, \cite{BKKS},  Xia-Yin-Zhou, \cite{XYZ}), the work of Agostiniani-Mantegazza-Mazzieri-Oronzio, \cite{AMMO}, on the Penrose inequality, and also appears in 
  the work of Chodosh-Li, \cite{ChL}, on stable minimal $3$-manifolds and the Bernstein problem.

\section{The differential inequality for $A_1$}

In this section, we will prove a differential inequality for $A_1(r)$ in terms of several natural integrals on the level sets of $b$.
Let $\Sigma$ denote a level set  of $b$ and  $K$ the curvature of $\Sigma$. The outward unit normal $\nn$ is given by
\begin{align}	
\nn &= \frac{\nabla b}{|\nabla b|} \, .
%	H_{\Sigma} = \dv \, \frac{\nabla b}{|\nabla b|} &
%	=\frac{\Delta\,b- \Hess_b ( \nn , \nn)  }{|\nabla b|}   \, . \label{e:HSt}
\end{align}
The Laplacian of $b^2$  and   trace-free part $\bB$  of the hessian of $b^2$  are  
\begin{align}	\label{e:deltab2}
	\Delta\,b^2&=6\,|\nabla b|^2\,   , \\
	\bB&=\Hess_{b^2}-2\,|\nabla b|^2\,g\,  .  \label{e:Bbb}
\end{align}
On $\RR^3$, 
  $u = |x|^{-1}$,  $b^2=|x|^2$  and, thus, 
 $\bB$ vanishes.  This reflects the conical structure of $\RR^3$.

Define   $B_1, B_2, S_1$ by
\begin{align}
B_1(r) &= r^{-2} \, \int_{b=r} \Hess_{b^2}(\nn , \nn) \, , \\
	B_2 (r) &=   \int_{b=r} \frac{ |\bB|^2}{|\nabla b^2|^2}  \, , \\
	S_1 (r) &= \int_{b=r} S \, .
\end{align}
The function $A_1$ is locally Lipschitz, differentiable almost everywhere and absolutely continuous,  while $B_1$ and $S_1$ are continuous   (see Appendix \ref{s:app2}).

The next proposition  implies that  $r\, A_1'(r) \geq A_1 (r) - 4\, \pi$ when $S \geq 0$, which can be seen to be  equivalent to the monotonicity formula in 
\cite{MW1}.{\footnote{Theorem $1.1$ in \cite{MW1} states that $\frac{d}{dt} \, \left( \frac{1}{t} \, \int_{u=t} |\nabla u|^2 \right) \leq 4 \, \pi$.  If $r = \frac{1}{t}$, this is
$r\, A_1'(r) \geq A_1 (r) - 4\, \pi$.}}

\begin{Pro}	\label{c:MWa}
If  the level sets of $b$ are connected, then
	\begin{align}	
			r\, A_1'(r) &\geq A_1 (r) - 4\, \pi + \frac{1}{2\, r} \, \int_0^r \left( S_1 (s) + B_2 (s)\right)  \, ds \, .
	\end{align}
\end{Pro}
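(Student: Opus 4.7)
\medskip
\noindent\textbf{Proof plan.} The plan is to first compute $r\,A_1'(r)$ exactly, reducing the claim to a pointwise differential inequality on $B_1$, and then prove that inequality by combining Gauss--Bonnet on the connected level surfaces with the Bochner identity for the harmonic function $u = b^{-1}$.

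For the reduction I apply the first-variation formula $\frac{d}{dr}\int_{\{b=r\}} F\,dA = \int_{\{b=r\}} (\partial_\nn F + F\,H_\Sigma)/|\nabla b|\,dA$ with $F = |\nabla b|^2$. Using $\partial_\nn|\nabla b|^2 = 2|\nabla b|\,\Hess_b(\nn,\nn)$, $|\nabla b|\,H_\Sigma = \Delta b - \Hess_b(\nn,\nn)$, the identity $\Delta b = 2|\nabla b|^2/b$ (from \eqref{e:deltab2}), and $\Hess_b(\nn,\nn) = (\Hess_{b^2}(\nn,\nn) - 2|\nabla b|^2)/(2r)$ on $\{b=r\}$, one obtains the exact identity $r\,A_1'(r) = \tfrac{1}{2}B_1(r) - A_1(r)$. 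Substituting this, the desired estimate rearranges to $r(B_1(r) - 4A_1(r) + 8\pi) \geq \int_0^r (S_1 + B_2)\,ds$. Since both sides vanish as $r \to 0^+$ (the level sets collapse to $p$, where $b$ is asymptotically Euclidean), it suffices to prove the pointwise inequality
\begin{align*}
	r\,B_1'(r) \geq B_1(r) - 8\pi + S_1(r) + B_2(r).
\end{align*}

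For this, Gauss--Bonnet on the connected, closed, orientable surface $\Sigma_r := \{b=r\}$ gives $\int_{\Sigma_r} K_\Sigma \leq 4\pi$. The ambient Gauss equation $2K_\Sigma = S - 2\Ric(\nn,\nn) + H^2 - |A|^2$, combined with the shape operator formula $A_{ij} = \Hess_{b^2}(e_i,e_j)/(2r|\nabla b|)$ and the orthogonal decomposition $\Hess_{b^2} = \bB + 2|\nabla b|^2\,g$ (with $|\Hess_{b^2}|^2 = |\bB|^2 + 12|\nabla b|^4$), allows one to expand $\int_{\Sigma_r}(H^2 - |A|^2)$ as a linear combination of $A_1, B_1, B_2$ modulo a non-negative tangential term $\int_{\Sigma_r}\bigl|\nabla_\Sigma|\nabla b|\bigr|^2/|\nabla b|^2$ that is dropped. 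The remaining $\int_{\Sigma_r}\Ric(\nn,\nn)$ is handled by Bochner for $u = b^{-1}$: integrating $\tfrac{1}{2}\Delta|\nabla u|^2 = |\Hess u|^2 + |\nabla u|^2\,\Ric(\nn,\nn)$ over $\{b \leq r\}\setminus B_\varepsilon(p)$, applying the divergence theorem and letting $\varepsilon \to 0$ using the Euclidean asymptotics $u \sim d(\cdot,p)^{-1}$ near $p$, converts the Ricci integral into a boundary flux (which after differentiation in $r$ reproduces the $B_1$ and $A_1$ terms) plus the non-negative bulk $\int_{b \leq r}|\Hess u|^2$. Using $|\nabla u|^2 = |\nabla b|^2/b^4$, a direct computation shows that $|\Hess u|^2$ pointwise dominates $|\bB|^2/(4 b^6 |\nabla b|^2)$ up to radial terms absorbed by the flux, so coarea yields the extra $\int_0^r B_2(s)\,ds$.

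Combining the Gauss--Bonnet output and the Bochner identity and discarding only the non-negative squares produces the pointwise $r\,B_1'(r) \geq B_1(r) - 8\pi + S_1(r) + B_2(r)$; integrating from $0$ to $r$ gives the proposition. The main obstacle is the algebraic bookkeeping in the Bochner step: one must isolate the contribution of the trace-free part $\bB$ (measured against $|\nabla b^2|^2$) inside $|\Hess u|^2$, and match all remaining radial cross-terms with the expansion of $H^2 - |A|^2$ coming from the Gauss equation, so that the surviving combination is exactly $B_1 - 8\pi + S_1 + B_2$ with no spurious residual.
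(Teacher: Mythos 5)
Your broad outline---derive $r\, A_1'(r)=\frac{1}{2}\, B_1(r)-A_1(r)$, reduce the target to a differential inequality for $B_1$, and deduce that from Gauss--Bonnet plus a Bochner identity on the level sets---does match the paper's skeleton. But there is a genuine gap in the Bochner step as you have set it up. Integrating $\frac{1}{2}\,\Delta|\nabla u|^2 = |\Hess_u|^2 + |\nabla u|^2\,\Ric(\nn,\nn)$ over $\{b\le r\}$ and then differentiating in $r$ puts the co-area weight $|\nabla u|^2/|\nabla b|=|\nabla b|/b^4$ in front of every level-set integrand. After substituting the Gauss equation, the Gauss curvature therefore appears as $\int_{b=r}\frac{|\nabla b|}{r^4}\,K$, which Gauss--Bonnet does \emph{not} control, while the Hessian bulk carries weight $|\nabla b|^{-1}$ and $b^{-6}$ after co-area. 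Concretely, a short computation gives $\Hess_u = -\frac{1}{2}\,b^{-3}\left(\Hess_{b^2}-6\,db\otimes db\right)$, and hence
\begin{align}
4\,b^6\,|\Hess_u|^2 = |\bB|^2-12\,|\nabla b|^2\,\bB(\nn,\nn)+24\,|\nabla b|^4 \, , \notag
\end{align}
which neither dominates $|\bB|^2/|\nabla b|^2$ pointwise (the coefficient of $|\bB|^2$ here is $1$, not $1/|\nabla b|^2$, and there is a cross term of indefinite sign) nor, under co-area, produces the $b^{-2}\,|\nabla b|^{-2}$ weight appearing in $\int_0^r B_2(s)\,ds$. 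In short, the unweighted Bochner identity for $u$ generates the wrong powers of both $b$ and $|\nabla b|$ throughout; there is no collection of ``radial terms absorbed by the flux'' that fixes a uniform discrepancy in the $b$-power. The paper avoids all of this by applying the divergence theorem to the \emph{weighted} vector field $b^{-1}\,\nabla|\nabla b^2|$ and invoking Proposition~\ref{l:sternbochA}: the integrand $\frac{\Delta|\nabla b^2|}{b\,|\nabla b|}=2\,\frac{\Delta|\nabla b^2|}{|\nabla b^2|}=S-2K+\frac{|\bB|^2}{|\nabla b^2|^2}+3\,\frac{\Hess_{b^2}(\nn,\nn)}{b^2}$ carries the unweighted $K$ (so Gauss--Bonnet applies directly) and exactly the $B_2$ density, so a single co-area integration yields \eqr{e:e130}. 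If you wish to work with $u$ directly, you would have to insert a compensating weight such as $b^4$ against the Bochner formula, track the new cross terms from the product rule, and verify they reorganize into $B_1$ and $A_1$---this has not been done. A secondary issue: your plan differentiates $B_1$ pointwise, but the paper establishes only continuity of $B_1$ (Appendix~\ref{s:app2}), not differentiability; compute $r\,B_1(r)$ as a bulk integral over $\{b\le r\}$ via the divergence theorem, as the paper does, rather than integrating a pointwise inequality for $B_1'$.
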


\subsection{Differentiating $A_1$}

 The next lemma recalls the formula for the derivative of $A_1$; this is contained in  \cite{C3,CM4}, but we include the proof for
 completeness.  This calculation uses that $u$ is a positive proper harmonic function with precise asymptotics at the pole $p$, but it does not require curvature or topological assumptions on $M$.

\begin{Lem}	\label{l:A1p}
The function $A_1$ is continuously differentiable with
\begin{align}
	r\, A_1'(r)&=  \frac{1}{2} \, B_1 (r) - A_1(r) = \frac{1}{2} \, r^{-2} \, \int_{b=r} \bB (\nn , \nn)  \,  , \label{e:claim1A}  \\
		2\,  \left( r \, A_1(r) \right)' &= B_1(r)  \, . \label{e:claim1B} 
\end{align}
\end{Lem}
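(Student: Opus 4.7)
The plan is to differentiate $r^2A_1(r)=\int_{b=r}|\nabla b|^2$ directly using the first variation formula for integrals over a one-parameter family of level sets, and then translate the answer into the language of $B_1$ and $\bB$. The only nontrivial input is the harmonicity identity $\Delta b = 2|\nabla b|^2/b$ obtained from $\Delta u = 0$ and $u = b^{-1}$; no curvature or topology of $M$ enters, consistent with the hypotheses of the lemma.

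At a regular value $r$ of $b$, I would parametrize $\Sigma_r=\{b=r\}$ by the flow of $V = \nabla b/|\nabla b|^2$, which is normal to $\Sigma_r$ with speed $1/|\nabla b|$. A short computation gives $\dv(\nn) = (\Delta b - \Hess_b(\nn,\nn))/|\nabla b|$, so the first variation formula applied to $f = |\nabla b|^2$ produces
\begin{equation*}
\frac{d}{dr}\int_{b=r}|\nabla b|^2 = \int_{b=r}\Bigl[\,2\,\Hess_b(\nn,\nn) + \Delta b - \Hess_b(\nn,\nn)\,\Bigr].
\end{equation*}
Substituting $\Delta b = 2|\nabla b|^2/r$ on $\Sigma_r$ and subtracting the resulting $2rA_1(r)$ from both sides of $(r^2A_1)' = 2rA_1 + r^2A_1'$ gives the clean identity $r^2A_1'(r) = \int_{b=r}\Hess_b(\nn,\nn)$.

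To produce the forms in \eqref{e:claim1A} I would invoke the product rule $\Hess_{b^2} = 2\,\nabla b\otimes\nabla b + 2b\,\Hess_b$, which at $\nn$ reads $\bB(\nn,\nn) = \Hess_{b^2}(\nn,\nn) - 2|\nabla b|^2 = 2b\,\Hess_b(\nn,\nn)$. Dividing by $2b=2r$ and integrating over $\Sigma_r$ immediately gives the rightmost expression in \eqref{e:claim1A}; expanding $\bB(\nn,\nn)$ and dividing by $r^2$ rewrites this as $\tfrac12 B_1(r) - A_1(r)$, establishing the middle expression. Finally, \eqref{e:claim1B} follows from $2(rA_1)' = 2A_1 + 2rA_1' = 2A_1 + B_1 - 2A_1 = B_1$.

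The main obstacle is continuous differentiability rather than just a.e.\ differentiability, since the computation above only holds at regular values of $b$. To upgrade, I would combine Sard's theorem (critical values of $b$ form a closed measure-zero set), smoothness of $b$ on $M\setminus\{p\}$, and continuity of $B_1$ (asserted separately in the excerpt and verifiable via coarea applied to $\int_{b\leq r}\Delta b^2 = 6\int_{b\leq r}|\nabla b|^2$). Absolute continuity of $A_1$ together with continuity of $\tfrac12 B_1 - A_1$ then promotes pointwise equality a.e.\ to $A_1\in C^1$. The precise asymptotics of $u$ at $p$ enter only to guarantee that any boundary contributions at small $b$ vanish, which is the only way the pole interacts with the argument.
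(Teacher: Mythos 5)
Your computation is correct, and it arrives at the same three identities by a genuinely different route than the paper. The paper writes $A_1(r)=\int_{b=r}\langle |\nabla b|\, b^{-2}\nabla b,\nn\rangle$ and uses that $b^{-2}\nabla b$ is divergence-free (this is precisely $\dv(-\nabla u)=0$) to apply the divergence theorem on $\{b<r\}$, with the pole asymptotics contributing the inner boundary term $4\pi$; the coarea formula then produces $A_1(r)=4\pi+\int_0^r g(s)\,ds$ with $g$ continuous, so $A_1\in C^1$ is read off immediately. You instead apply the first variation formula to $\int_{b=r}|\nabla b|^2=r^2A_1(r)$ along the flow of $\nabla b/|\nabla b|^2$ and substitute the pointwise identity $\Delta b=2|\nabla b|^2/b$; this is a purely local computation at regular values and never touches the pole. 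Both use harmonicity essentially: yours through $\Delta b=2|\nabla b|^2/b$, the paper's through $\dv(b^{-2}\nabla b)=0$. The trade-off is exactly the one you identify at the end: the paper gets continuous differentiability as a free by-product of the integral representation (and as a bonus records the normalization $A_1(0^+)=4\pi$, which you do not need for this lemma but which matters elsewhere), while your argument requires separately importing absolute continuity of $A_1$ from the appendix and then upgrading the a.e.\ identity $A_1'=\tfrac12r^{-1}(B_1-2A_1)$ to a $C^1$ statement via continuity of $B_1$. That upgrade is legitimate given the cited facts, so nothing is missing; it is simply a more pedestrian bookkeeping step that the paper's chosen vector field sidesteps. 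One small remark: your parenthetical suggestion that continuity of $B_1$ can be ``verified via coarea applied to $\int_{b\le r}\Delta b^2=6\int_{b\le r}|\nabla b|^2$'' does not quite produce $B_1$ (the boundary term there is $\int_{b=r}|\nabla b^2|$, not $\int_{b=r}\Hess_{b^2}(\nn,\nn)$); the correct justification is the appendix's general statement that $\int_{b=r}f$ is continuous for $f$ continuous off the critical set and locally bounded, applied to $f=\Hess_{b^2}(\nn,\nn)$, which is bounded by $\|\Hess_{b^2}\|$.
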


\begin{proof}
Since $\frac{|\nabla b|}{b} = |\nabla \log u|$, the asymptotics of the Green's function   near the pole $p$ (lemma $2.1$ in \cite{C3}, \cite{GS})
gives that  
\begin{align}
	  \lim_{s\to 0} \, \int_{b=s} b^{-2} \,  |\nabla b|^2 = 4\, \pi  \, .
\end{align}
Using that $\dv (b^{-2} \, \nabla b) = 0$ away from $p$ and
\begin{align}
	\nabla \, |\nabla b| = \nabla \, \left(  \frac{ |\nabla b^2|}{2b}
	\right) = \frac{ \Hess_{b^2} (\nn , \cdot)}{2b} - \frac{ |\nabla b^2| \, \nabla b}{2b^2} \, ,
\end{align}
the divergence theorem gives that
\begin{align}
	A_1(r) &= \int_{b=r} \langle |\nabla b| \, (b^{-2} \, \nabla b) , \nn \rangle =
	4\, \pi  +  \int_{b<r} b^{-2} \, \langle \nabla b , \frac{ \Hess_{b^2} (\nn , \cdot)}{2b} - \frac{ |\nabla b^2| \, \nabla b}{2b^2} \rangle \notag \\
	&=4\, \pi + \int_{b<r} b^{-3} \,  \left( \frac{1}{2} \, |\nabla b| \,   \Hess_{b^2} (\nn , \nn) -  |\nabla b|^3 \right)    
	\,  ,
\end{align}
where we also used the asymptotics at $b=0$ to evaluate the inner boundary term.
Combining  this and the coarea formula gives
\begin{align}	\label{e:thihere}
	r\, A_1'(r)&= r^{-2} \, \int_{b=r}
	\left( \frac{1}{2} \,   \Hess_{b^2} (\nn , \nn) -  |\nabla b|^2 \right) = \frac{1}{2} \, B_1 (r) - A_1(r)    \,  .
\end{align}
Note that the right-hand side is continuous in $s$ (by Appendix \ref{s:app2}); since $A_1$ is absolutely continuous, we see that $A_1$ is continuously differentiable.  
Furthermore, \eqr{e:thihere}  gives the first equality in \eqr{e:claim1A}. To get the second equality, use \eqr{e:Bbb} to get that
\begin{align}	 
	\frac{1}{2} \, \int_{b=r}  \bB (\nn , \nn) = \int_{b=r} \left( \frac{1}{2} \, \Hess_{b^2} (\nn , \nn) - |\nabla b|^2 \right)  \,  .
\end{align}
  Finally, the last claim \eqr{e:claim1B} follows easily from the first equality in \eqr{e:claim1A}.
\end{proof}

\subsection{Level set geometry}

The next proposition expresses $\Delta |\nabla b^2|$ in terms of the scalar curvature of $M$ and the geometry of the level set.
This result is local and does not require any assumptions on $M$ (cf. equation $(1.2)$ in \cite{MW2}).

\begin{Pro}	\label{l:sternbochA}
We have that
\begin{align}
	  \frac{\Delta \, |\nabla b^2|}{|\nabla b^2|} & =      \frac{1}{2} \, S -   K +  \frac{ |\bB|^2}{ 2\, |\nabla b^2|^2}  +\frac{3}{2} \, \frac{ \Hess_{b^2}(\nn , \nn)  }{  b^2 }
	\, . \notag
\end{align}
\end{Pro}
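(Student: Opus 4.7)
The plan is to apply the Bochner formula to $b^{2}$, simplify using the structural identity $\Delta b^{2}=6\,|\nabla b|^{2}$, and then invoke the three-dimensional Gauss equation to convert $\Ric(\nn,\nn)$ into the intrinsic quantities $S$ and $K$ plus extrinsic terms on the level set $\Sigma=\{b=r\}$. The crux will be that all the extrinsic geometry collapses back into $|\bB|^{2}$ once one uses that $\bB$ is trace-free.

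First I would start from Bochner applied to $f=b^{2}$, combined with $\tfrac12\Delta|\nabla b^{2}|^{2}=|\nabla b^{2}|\,\Delta|\nabla b^{2}|+|\nabla|\nabla b^{2}||^{2}$, to get
\begin{align*}
|\nabla b^{2}|\,\Delta |\nabla b^{2}| = |\Hess_{b^{2}}|^{2}-|\nabla |\nabla b^{2}||^{2}+\langle \nabla b^{2},\nabla \Delta b^{2}\rangle +\Ric(\nabla b^{2},\nabla b^{2}) .
\end{align*}
The identity $\nabla |\nabla f|^{2}=2\,\Hess_{f}(\nabla f,\cdot)^{\sharp}$ together with $\nabla b^{2}=|\nabla b^{2}|\,\nn$ gives $\nabla |\nabla b^{2}|=\Hess_{b^{2}}(\nn,\cdot)^{\sharp}$, so $|\nabla |\nabla b^{2}||^{2}=\sum_{i}\Hess_{b^{2}}(\nn,e_{i})^{2}$ in any orthonormal frame $\{e_{1},e_{2},\nn\}$ adapted to $\Sigma$. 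The trace-free decomposition together with $\Delta b^{2}=6|\nabla b|^{2}$ gives $|\Hess_{b^{2}}|^{2}=|\bB|^{2}+12|\nabla b|^{4}$, and a short computation using $\Hess_{b}=(\Hess_{b^{2}}-2\,db\otimes db)/(2b)$ yields $\langle \nabla b^{2},\nabla \Delta b^{2}\rangle=12|\nabla b|^{2}\,\Hess_{b^{2}}(\nn,\nn)-24|\nabla b|^{4}$.

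Next I would split into normal and tangential pieces on $\Sigma$. Writing $\Hess_{b^{2}}(X,Y)=|\nabla b^{2}|\,A(X,Y)$ for $X,Y$ tangent to $\Sigma$, where $A$ is the second fundamental form with respect to $\nn$ and $H=\operatorname{tr} A$, one obtains $|\Hess_{b^{2}}|^{2}-|\nabla |\nabla b^{2}||^{2}=|\nabla b^{2}|^{2}\,|A|^{2}+\sum_{i=1}^{2}\bB(\nn,e_{i})^{2}$. The three-dimensional Gauss equation gives $\Ric(\nn,\nn)=\tfrac{S}{2}-K+\tfrac{1}{2}(H^{2}-|A|^{2})$ and hence $\Ric(\nabla b^{2},\nabla b^{2})=|\nabla b^{2}|^{2}\Ric(\nn,\nn)$. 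Substituting into the Bochner identity and dividing by $|\nabla b^{2}|^{2}$ (using $|\nabla b^{2}|^{2}=4b^{2}|\nabla b|^{2}$ so that $12|\nabla b|^{2}/|\nabla b^{2}|^{2}=3/b^{2}$) produces $\tfrac{S}{2}-K+\tfrac{3}{2}\,\Hess_{b^{2}}(\nn,\nn)/b^{2}$ plus an extrinsic remainder involving $|A|^{2}+H^{2}$ and $\sum_{i}\bB(\nn,e_{i})^{2}$.

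The main obstacle will be to show this extrinsic remainder equals $|\bB|^{2}/(2|\nabla b^{2}|^{2})$. For this I would use the tracelessness $\operatorname{tr}\bB=0$, which forces $\bB(\nn,\nn)+|\nabla b^{2}|H=4|\nabla b|^{2}$, combined with the componentwise expansion
\begin{align*}
|\bB|^{2}=|\nabla b^{2}|^{2}(|A|^{2}+H^{2})+2\sum_{i=1}^{2}\bB(\nn,e_{i})^{2}-12|\nabla b|^{2}|\nabla b^{2}|H+24|\nabla b|^{4},
\end{align*}
which comes from $\bB(e_{i},e_{j})=|\nabla b^{2}|A_{ij}-2|\nabla b|^{2}\delta_{ij}$ on $\Sigma$ and $\bB(\nn,\nn)=4|\nabla b|^{2}-|\nabla b^{2}|H$. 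The $H$-dependent terms then cancel via the traceless identity, and the residual factors of $|\nabla b|^{2}/b^{2}$ line up precisely with the $\tfrac{3}{2}\,\Hess_{b^{2}}(\nn,\nn)/b^{2}$ already isolated in the previous step, delivering the stated formula.
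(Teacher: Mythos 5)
Your argument is correct, and it follows the same core strategy as the paper: apply the Bochner formula to $b^2$, then use the contracted Gauss equation to convert $\Ric(\nn,\nn)$ into $\tfrac{1}{2}S-K$ plus extrinsic terms, and organize the extrinsic remainder into $|\bB|^2$. I have checked the key intermediate claims: $\langle\nabla b^2, \nabla\Delta b^2\rangle=12|\nabla b|^2\Hess_{b^2}(\nn,\nn)-24|\nabla b|^4$, the traceless identity $\bB(\nn,\nn)=4|\nabla b|^2-|\nabla b^2|H$, the componentwise expansion of $|\bB|^2$, and the final cancellation against the $H$-dependent terms; all hold.

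The only structural difference is in the bookkeeping. The paper first proves a general identity for an arbitrary function $v$ (Lemma \ref{l:sternboch}), writing the extrinsic contribution explicitly in terms of $|\Hess_v|^2$, $\Delta v$, and $\Hess_v(\tn,\tn)$, and then specializes to $v=b^2$ using $\Delta b^2=6|\nabla b|^2$ and collapsing via $|\bB|^2=|\Hess_{b^2}|^2-12|\nabla b|^4$. You work with $v=b^2$ from the start, decompose $\bB$ componentwise into $\bB(\nn,\nn)$, $\bB(\nn,e_i)$, and tangential pieces expressed through $A$ and $H$, and then use tracelessness to drive the cancellation. The two cancellation mechanisms are algebraically equivalent but packaged differently. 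Your route is arguably a hair more direct for this specific conclusion; the paper's route yields a reusable lemma valid for any level-set function and makes the role of $\Delta v$ transparent, which is why it is stated separately.
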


\vskip2mm
 The next lemma uses the gauss equation (cf. \cite{SY,JK,S,BKKS,AMO}) to express the Ricci curvature in the direction normal to a level set  in terms of the ambient scalar curvature, the gauss curvature of the level set, and  derivatives of  the function and then combines this with the Bochner formula.

 \begin{Lem}	\label{l:sternboch}
If $v$ is a function on $M^3$ and $\tn = \frac{\nabla v}{|\nabla v|}$, then
\begin{align}
	\frac{\Delta \, |\nabla v|}{|\nabla v|} &= \frac{1}{2} \, S - K + \frac{1}{2} \, \frac{ |\Hess_v|^2}{|\nabla v|^2} + \frac{1}{2} \, |\nabla v|^{-2}\,\left[|\Delta\,v|^2-2\,\Delta\,v\,\Hess_v( \tn ,  \tn)\right]+\frac{\langle\nabla \Delta\,v, \tn\rangle}{|\nabla v|}\, .\notag
\end{align}
\end{Lem}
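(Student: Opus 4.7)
\medskip

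\textbf{Proof plan.} The natural starting point is the Bochner formula applied to $v$:
\begin{align*}
\tfrac{1}{2}\,\Delta\,|\nabla v|^2 \;=\; |\Hess_v|^2 + \langle \nabla v , \nabla \Delta v \rangle + \Ric(\nabla v , \nabla v)\, .
\end{align*}
Since $|\nabla v|^2 = (|\nabla v|)^2$, the standard identity $\Delta(f^2) = 2 f\,\Delta f + 2|\nabla f|^2$ with $f = |\nabla v|$ lets me trade $\Delta\,|\nabla v|^2$ for $|\nabla v|\,\Delta\,|\nabla v|$, at the cost of picking up $|\nabla |\nabla v||^2$. Using $\nabla\,|\nabla v|^2 = 2\,\Hess_v(\nabla v , \cdot)$ one reads off $\nabla\,|\nabla v| = \Hess_v(\tn , \cdot)$, so $|\nabla\,|\nabla v||^2 = |\Hess_v(\tn,\cdot)|^2$. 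Dividing through by $|\nabla v|^2$ and restricting to points where $\nabla v \neq 0$ (the statement is local and reduces to this case) gives
\begin{align*}
\frac{\Delta\,|\nabla v|}{|\nabla v|} \;=\; \frac{|\Hess_v|^2 - |\Hess_v(\tn,\cdot)|^2}{|\nabla v|^2} + \frac{\langle \nabla \Delta v , \tn\rangle}{|\nabla v|} + \Ric(\tn,\tn)\, .
\end{align*}

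Next, I would use the Gauss equation in dimension three to convert $\Ric(\tn,\tn)$ into the intrinsic geometry of the level set $\Sigma = \{v=\text{const}\}$. In $3$D, $S = 2\,K_M(e_1,e_2) + 2\,\Ric(\tn,\tn)$, and the Gauss equation on $\Sigma$ gives $K = K_M(e_1,e_2) + \det A$, where $A(X,Y) = |\nabla v|^{-1}\,\Hess_v(X,Y)$ is the second fundamental form of $\Sigma$ and $H=\tr_\Sigma A$. Combining these and using $2\,\det A = H^2 - |A|^2$ yields
\begin{align*}
\Ric(\tn,\tn) \;=\; \tfrac{1}{2}\,S - K + \tfrac{1}{2}\,(H^2 - |A|^2)\, .
\end{align*}

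Then I would express $H$ and $|A|^2$ back in terms of $v$. Choosing an orthonormal frame $e_1,e_2,\tn$ with $e_1,e_2$ tangent to $\Sigma$, $H = |\nabla v|^{-1}(\Delta v - \Hess_v(\tn,\tn))$, and $|\nabla v|^2\,|A|^2 = \sum_{i,j\leq 2}\Hess_v(e_i,e_j)^2 = |\Hess_v|^2 - 2\sum_i \Hess_v(e_i,\tn)^2 - \Hess_v(\tn,\tn)^2$. Expanding $H^2$ gives the bracketed term $|\Delta v|^2 - 2\,\Delta v\,\Hess_v(\tn,\tn)$ plus a leftover $\Hess_v(\tn,\tn)^2$.

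The final step is the algebraic cleanup, and this is where all the care is required: one needs the identity
\begin{align*}
|\Hess_v|^2 - 2\,|\Hess_v(\tn,\cdot)|^2 + \Hess_v(\tn,\tn)^2 - |\nabla v|^2\,|A|^2 \;=\; 0\, ,
\end{align*}
which follows directly from the orthogonal decomposition above once the $\tn$-row contributions from $|\Hess_v|^2$ and from $|\Hess_v(\tn,\cdot)|^2 = \sum_i \Hess_v(\tn,e_i)^2 + \Hess_v(\tn,\tn)^2$ are tracked. Plugging this identity in converts the coefficient of $|\Hess_v|^2/|\nabla v|^2$ from $1$ to $\tfrac{1}{2}$ and merges the remaining $\Hess_v(\tn,\tn)^2$ term into $\tfrac{1}{2}H^2$, giving exactly the displayed formula. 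The genuine content is the Bochner+Gauss combination; the main obstacle is book-keeping the tangential/normal split of the Hessian consistently so that the nonobvious coefficient $\tfrac{1}{2}$ in front of $|\Hess_v|^2/|\nabla v|^2$ comes out correctly.
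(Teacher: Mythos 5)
Your proposal is correct and is essentially the same argument as the paper's: both combine the Bochner formula with the Gauss equation for the level set of $v$, use $\nabla|\nabla v| = \Hess_v(\tn,\cdot)$ and $|\nabla v|\,H_\Sigma = \Delta v - \Hess_v(\tn,\tn)$, and expand $|A_\Sigma|^2$ via the normal/tangential split of the Hessian; the only difference is presentational (you begin with Bochner and substitute the Gauss-equation expression for $\Ric(\tn,\tn)$, while the paper derives $\Ric(\tn,\tn)$ first and then invokes Bochner), and your identity in the ``algebraic cleanup'' step is just the definition $|\nabla v|^2\,|A|^2 = |\Hess_v|^2 - 2\,|\Hess_v(\tn,\cdot)|^2 + \Hess_v(\tn,\tn)^2$ restated.
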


\begin{proof}
Let $e_1 , e_2$ be a local orthonormal frame for the level sets of $v$, so that $e_1, e_2, \tn$ is a local orthonormal frame for $M^3$.
  The definitions of $S$ and $\Ric$ give
\begin{align}	\label{e:SY1}
	S = \Ric (e_1 , e_1) + \Ric (e_2 , e_2) + \Ric (\tn , \tn)   = 2 \, \R(e_1 , e_2 , e_1 , e_2) + 2 \, \Ric (\tn , \tn) \, .
\end{align}
The Gauss equation   gives that 
\begin{align}
	K &= \R(e_1 , e_2 , e_1 , e_2) + \frac{1}{2} \,   \left| H_{\Sigma} \right|^2
	- \frac{1}{2} \, |A_{\Sigma}|^2  \, .	\label{e:SY2}
\end{align}
Combining the last two equations, we see that
\begin{align}	\label{e:att}
	S &=2 \, \Ric (\tn , \tn) +  2\, K +  |A_{\Sigma}|^2 - |H_{\Sigma}|^2 \, .
\end{align}
We also have that
\begin{align}   \label{e:AfromHess}
	|\nabla v|^2 \, |A_{\Sigma}|^2 = \left| \Hess_v \big|_{\Sigma} \right|^2 = |\Hess_v|^2 +  (\Hess_v ( \tn , \tn))^2 - 2 \, 
	|\nabla |\nabla v||^2 \, .
\end{align}
 Using this in \eqr{e:att} gives that  (cf. \cite{S, SY}) 
\begin{align}
	S&= 2 \, \Ric (\tn , \tn) +  2\, K - |H_{\Sigma}|^2 + 
	|\nabla v|^{-2} \, \left(    |\Hess_v|^2 +  (\Hess_v( \tn , \tn))^2 - 2 \, 
	|\nabla |\nabla v||^2 \right)  \, .
\end{align}
Using that $|\nabla v| \, H_{\Sigma} = |\nabla v| \,  \dv \frac{\nabla v}{|\nabla v|} =   \Delta v - \Hess_v (\tn , \tn) $     gives 
\begin{align}
	\Ric (\tn ,  \tn) =  \frac{1}{2} \, S  -  K  +
	|\nabla v|^{-2}\,\left[|\nabla |\nabla v||^2+ \frac{1}{2} \, |\Delta\,v|^2- \Delta\,v\,\Hess_v(\tn,\tn)-\frac{1}{2}\,|\Hess_v|^2\right]\, .
\notag 
\end{align}
By the Bochner formula
\begin{align}
	 |\nabla v| \, \Delta \, |\nabla v| + |\nabla |\nabla v||^2 = \frac{1}{2} \, \Delta |\nabla v|^2 = |\Hess_v|^2 +\langle \nabla\Delta\,v,\nabla v\rangle+ \Ric (\nabla v , \nabla v)\,  ,
\end{align}
which can be re-written as
\begin{align}
\frac{\Delta\,|\nabla v|}{|\nabla v|}-\frac{|\Hess_v|^2}{|\nabla v|^2}+\frac{|\nabla |\nabla v||^2}{|\nabla v|^2}-\frac{\langle \nabla \Delta\,v,\nn\rangle}{|\nabla v|}=\Ric(\tn,\tn)\,  .
\end{align}
Combining this with the formula for $\Ric (\tn ,  \tn)$  gives the claim.  
\end{proof}

Specializing   to the function $v=b^2$ gives Proposition \ref{l:sternbochA}:

\begin{proof}[Proof of Proposition \ref{l:sternbochA}]
Since $\Delta \, b^2 = 6 \, |\nabla b|^2$, Lemma \ref{l:sternboch} gives
\begin{align}
	\frac{\Delta \, |\nabla b^2|}{|\nabla b^2|} &= \frac{1}{2} \, S - K + \frac{1}{2} \, \frac{ |\Hess_{b^2}|^2}{|\nabla b^2|^2} + \frac{1}{2} \, |\nabla b^2|^{-2}\,\left[|\Delta\,b^2|^2-2\,\Delta\,b^2\,\Hess_{b^2}(\nn,\nn)\right]+\frac{\langle\nabla \Delta\,b^2,\nn\rangle}{|\nabla b^2|} \notag \\
	&= \frac{1}{2} \, S - K +  \frac{ |\Hess_{b^2}|^2}{ 8\, b^2 \, |\nabla b|^2} + \frac{\left[ 9\, |\nabla b|^4-3\, |\nabla b|^2\,\Hess_{b^2}(\nn,\nn)\right]}{ 2\, b^2 \, |\nabla b|^{2}}+3\, \frac{\langle \nabla |\nabla b|^2,\nn\rangle}{  b \, |\nabla b|}
\end{align}
Since
 $
\nabla b^2=2\,b\,\nabla b$, we have that
$|\nabla b^2|^2=4\,b^2\,|\nabla b|^2$ and, thus, 
\begin{align}	\label{e:calc1A}
 \langle \nabla |\nabla b|^2,\nn\rangle = b^{-1} \, |\nabla b| \, \Hess_{b^2}(\nn , \nn) - 2 \, b^{-1} \, |\nabla b|^3
 = \frac{|\nabla b|}{b} \, \bB (\nn , \nn) \,  .
\end{align}
Therefore, we get that
\begin{align}
	2\, \frac{\Delta \, |\nabla b^2|}{|\nabla b^2|} & =  S - 2\, K +  \frac{ |\Hess_{b^2}|^2}{ 4\, b^2 \, |\nabla b|^2} + \frac{  9\, |\nabla b|^4-3\, |\nabla b|^2\,\Hess_{b^2}(\nn,\nn) }{ b^2 \, |\nabla b|^{2}}\notag \\
	&+6\, \frac{b^{-1} \, |\nabla b| \, \Hess_{b^2}(\nn , \nn) - 2 \, b^{-1} \, |\nabla b|^3 }{  b \, |\nabla b|}  \\
	&=   S - 2\, K +  \frac{ |\Hess_{b^2}|^2 - 12 \, |\nabla b|^4}{ 4\, b^2 \, |\nabla b|^2}  +3\, \frac{ \Hess_{b^2}(\nn , \nn)  }{  b^2  }
	\, . \notag
\end{align}
The corollary follows from this and 
\begin{align}
|\bB|^2=|\Hess_{b^2}|^2-12\,|\nabla b|^4\,  ,	\label{e:Bsq}
\end{align}
which uses \eqr{e:deltab2}.
\end{proof}

 \subsection{A lower bound for $B_1$}
 
 We turn next to  a lower bound for  $B_1$.  This requires  topological control on the level sets so that the Gauss-Bonnet theorem yields an integral bound on the curvature
 (the level sets are surfaces so Gauss-Bonnet applies). The local calculations of the previous subsection will be used.

 \begin{Pro}	\label{p:insert}
 If the level sets of $b$ are connected, then for $0< r_1 < r_2 < \infty$ regular values
\begin{align}
	r\, B_1 (r)  &   \geq  4\, r \, A_1 (r) - 8 \, \pi \, r + \int_0^{r} \left( S_1 (s)+ B_2 (s)   
\right)\, ds \, .
\end{align}
 \end{Pro}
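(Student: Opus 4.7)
The plan is to integrate the pointwise identity from Proposition \ref{l:sternbochA} against the weight $\tfrac{1}{b}$ over the sublevel set $\{b<r\}$, then use the coarea formula to convert everything to integrals over level sets of $b$. The weight $1/b$ is chosen so that, after multiplying by $|\nabla b^{2}|=2b\,|\nabla b|$, the factor of $b$ cancels and leaves a clean $|\nabla b|$ which is exactly the factor needed for coarea to produce the level-set integrals $S_1(s)$, $B_2(s)$, $B_1(s)$, and $\int_{b=s}K$ on the right-hand side, with no extra $s$-weights.

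More precisely, Proposition \ref{l:sternbochA} rewrites $\Delta|\nabla b^{2}|$ as $|\nabla b^{2}|$ times the sum of $\tfrac12 S$, $-K$, $\tfrac{|\bB|^{2}}{2|\nabla b^{2}|^{2}}$, and $\tfrac{3}{2}\tfrac{\Hess_{b^{2}}(\nn,\nn)}{b^{2}}$. Multiplying by $1/b$ and using $|\nabla b^{2}|/b=2|\nabla b|$, the coarea formula $\int_{b<r} g\,|\nabla b|=\int_0^r\int_{b=s}g$ converts the bulk integral into
\begin{align*}
\int_{b<r}\frac{\Delta|\nabla b^{2}|}{b} \;=\; \int_0^r\Bigl(S_1(s)+B_2(s)+3\,B_1(s)-2\int_{b=s}K\Bigr)\,ds.
\end{align*}
On the other hand, Green's identity on the annulus $\{\varepsilon<b<r\}$ and the computation (already recorded in the proof of Proposition \ref{l:sternbochA}) that $\langle \nabla b,\nabla|\nabla b^{2}|\rangle=|\nabla b|\,\Hess_{b^{2}}(\nn,\nn)$ give
\begin{align*}
\int_{b<r}\frac{\Delta|\nabla b^{2}|}{b} \;=\; r\,B_1(r)+\int_{b<r}\frac{|\nabla b|\,\Hess_{b^{2}}(\nn,\nn)}{b^{2}} \;=\; r\,B_1(r)+\int_0^r B_1(s)\,ds,
\end{align*}
where in the last step coarea converts the volume integral (since $\Hess_{b^{2}}(\nn,\nn)/b^{2}$ is exactly the integrand defining $B_1$ on each level set). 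The inner boundary term at $b=\varepsilon$ vanishes as $\varepsilon\to 0$ by the Green's function asymptotics $b\sim \dist(\cdot,p)$ and $|\nabla b|\to 1$ near the pole, so $\tfrac{1}{\varepsilon}\int_{b=\varepsilon}\Hess_{b^{2}}(\nn,\nn)=O(\varepsilon)$.

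Equating the two expressions and rearranging yields
\begin{align*}
r\,B_1(r) \;=\; \int_0^r\bigl(S_1(s)+B_2(s)\bigr)\,ds \;+\;2\int_0^r B_1(s)\,ds\;-\;2\int_0^r\int_{b=s}K\,ds.
\end{align*}
The second equality of Lemma \ref{l:A1p} says $B_1(s)=2(sA_1(s))'$, so since $sA_1(s)\to 0$ as $s\to 0$, the antiderivative evaluates to $\int_0^r B_1(s)\,ds=2\,r\,A_1(r)$. The final input is Gauss--Bonnet: under the hypothesis that the level sets of $b$ are connected, each (at a regular value) is a closed orientable surface with $\chi\leq 2$, so $\int_{b=s}K=2\pi\chi(\{b=s\})\leq 4\pi$, whence $-2\int_0^r\int_{b=s}K\,ds\geq -8\pi\,r$. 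Substituting both facts produces the claimed inequality.

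The main obstacle is the careful Green's-identity bookkeeping: justifying that the inner boundary contribution at the pole is negligible requires the sharp asymptotics of the Green's function and its gradient, and one must verify the technical regularity needed to apply the divergence theorem on $\{b<r\}$ at a regular value $r$; everything else is a clean reorganization once the correct weight $1/b$ is identified.
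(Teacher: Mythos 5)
Your proposal is correct and follows essentially the same route as the paper: apply the divergence theorem to the vector field $b^{-1}\nabla|\nabla b^2|$, use the coarea formula to write both sides as integrals over level sets, substitute Proposition \ref{l:sternbochA} for the Laplacian term, bound $\int_{b=s}K$ by Gauss--Bonnet using connectedness, and convert $\int_0^r B_1$ to $2rA_1(r)$ via Lemma \ref{l:A1p}. The only small point worth flagging is that showing the inner boundary term $\varepsilon^{-1}\int_{b=\varepsilon}\Hess_{b^2}(\nn,\nn)\to 0$ needs a bound on $\Hess_{b^2}$ near the pole, which requires combining the Gilbarg--Serrin first-order asymptotics with interior elliptic estimates (your ``$b\sim\dist$ and $|\nabla b|\to 1$'' is only first order); the paper handles this in a footnote.
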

 
 \begin{proof}
Combining  the fact that $\langle \nabla |\nabla b^2|,\nn\rangle=\Hess_{b^2}(\nn,\nn)$, the divergence theorem and the coarea formula for $b$  gives that
\begin{align}	 
	s\, B_1 (s) & = s^{-1} \, \int_{b=s}\Hess_{b^2}(\nn,\nn) =  \int_{b=s} \langle b^{-1} \, \nabla |\nabla b^2|,\nn\rangle \notag \\
	&= \int_{b \leq s} \,
	\left(  \frac{\Delta \, |\nabla b^2|}{b} -  \frac{\langle \nabla b , \nabla |\nabla b^2| \rangle }{b^2} \right) 
	=  \int_0^{s}\int_{b=t}\left(
  \frac{\Delta \, |\nabla b^2|}{b\, |\nabla b|} - \frac{\langle \nn , \nabla |\nabla b^2| \rangle }{b^2} 
 \right) \, ,
\end{align}
where the third equality used the asymptotics at the pole{\footnote{Combining the \cite{GS} first order asymptotics of $u$ at the pole with standard elliptic estimates for the harmonic function $u$ on a ball of size $b/2$ gives that $\Hess_{b^2} (\nn , \nn)$ is bounded near $p$.}}
 to see that the inner boundary term goes to zero.
Using Proposition \ref{l:sternbochA}, the Gauss-Bonnet theorem and the connectedness of the level sets,  we get that
\begin{align}
s\, B_1 (s)  &= 
\int_0^{s} \left( S_1 (t)+ B_2 (t)  + 2\, B_1 (t) - 2\, \int_{b=t} K 
\right) \notag \\
&  \geq  \int_0^{s} \left( S_1 (t)+ B_2 (t)  + 2\, B_1 (t) -  8\, \pi 
\right)
 \, .	\label{e:e130}
\end{align}
Lemma \ref{l:A1p}
gives that $r\, A_1'(r)=  \frac{1}{2} \, B_1 (r) - A_1(r) $, so  that $\int_0^r B_1 (s) = 2 \, r \, A_1 (r)$.  Using this in \eqr{e:e130}, we conclude that
\begin{align}
	r\, B_1 (r)  &   \geq  4\, r \, A_1 (r) - 8 \, \pi \, r + \int_0^{r} \left( S_1 (s)+ B_2 (s)   
\right)\, ds \, .
\end{align}

\end{proof}

\begin{proof}[Proof of Proposition \ref{c:MWa}]
Since $r \, A_1'(r) = \frac{1}{2} \, B_1(r) - A_1(r)$  by Lemma \ref{l:A1p}, Proposition \ref{p:insert} gives that
\begin{align}
	2\, r\, A_1'(r) =  B_1(r) - 2\, A_1(r) \geq  2\,  A_1 (r) - 8 \, \pi   + \frac{1}{r} \, \int_0^{r} \left( S_1 (s)+ B_2 (s)   \right) \, ds \, .
\end{align}
\end{proof}

\section{The  main theorems} 
  
  In this section, we will study the growth $A_1(r)$ and use this to prove the main theorems.  The starting point is the differential inequality for $A_1$
  from Proposition \ref{c:MWa}.  When $S \geq 0$,  this gives the differential inequality  $r\, A_1'(r)  \geq A_1 (r) - 4\, \pi$ which leads to linear growth of $A_1$ if it gets above $4\, \pi$.  To get to quadratic growth, we will need  the   $B_2(r)$ term. 
 This is non-standard since $B_2$ vanishes in the model case.
 
 \vskip1mm
  The next lemma gives a lower bound for $B_2$ in terms of $A_1$ and $A_1'$.
  
  \begin{Lem}	\label{c:csCS}
We have that
\begin{align}
	(r\,A_1')^2&\leq  \frac{2}{3} \, A_1\,B_2 \, .
\end{align}
\end{Lem}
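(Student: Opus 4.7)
The plan is to combine the identity $rA_1'(r) = \tfrac{1}{2r^2}\int_{b=r}\bB(\nn,\nn)$ from Lemma \ref{l:A1p} with a weighted Cauchy--Schwarz inequality on $\{b=r\}$, together with a pointwise linear-algebra bound $\bB(\nn,\nn)^2 \le \tfrac{2}{3}|\bB|^2$ that is special to trace-free symmetric tensors in dimension three. The Cauchy--Schwarz step is what produces the product $A_1 \cdot B_2$ on the right-hand side, and the constant $\tfrac{2}{3}$ comes out of the pointwise bound (the coefficient is sharp and is precisely what makes the statement work).

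For the Cauchy--Schwarz step, I would split $\bB(\nn,\nn)=\tfrac{\bB(\nn,\nn)}{|\nabla b|}\cdot |\nabla b|$ on $\{b=r\}$ to obtain
\begin{equation*}
\left(\int_{b=r}\bB(\nn,\nn)\right)^{2} \le \left(\int_{b=r}\frac{\bB(\nn,\nn)^{2}}{|\nabla b|^{2}}\right)\left(\int_{b=r}|\nabla b|^{2}\right).
\end{equation*}
The second factor is exactly $r^{2} A_1(r)$ by the definition \eqref{e:Abeta}.

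For the pointwise bound, since $\bB$ is trace-free I would write $\bB(\nn,\nn)=\langle \bB,\nn\otimes\nn-\tfrac{1}{3}g\rangle$ and apply the standard Cauchy--Schwarz on symmetric tensors. Using $|\nn\otimes\nn|^{2}=1$, $\langle\nn\otimes\nn,g\rangle=1$, and $|g|^{2}=3$ gives $|\nn\otimes\nn-\tfrac{1}{3}g|^{2}=\tfrac{2}{3}$, hence $\bB(\nn,\nn)^{2}\le \tfrac{2}{3}|\bB|^{2}$. Together with the identity $|\nabla b^{2}|^{2}=4b^{2}|\nabla b|^{2}=4r^{2}|\nabla b|^{2}$ on $\{b=r\}$, this yields
\begin{equation*}
\int_{b=r}\frac{\bB(\nn,\nn)^{2}}{|\nabla b|^{2}} \le \frac{2}{3}\int_{b=r}\frac{|\bB|^{2}}{|\nabla b|^{2}} = \frac{8r^{2}}{3}\,B_2(r).
\end{equation*}

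Combining the two displays,
\begin{equation*}
(rA_1'(r))^{2} \;=\; \frac{1}{4r^{4}}\left(\int_{b=r}\bB(\nn,\nn)\right)^{2} \;\le\; \frac{1}{4r^{4}}\cdot \frac{8r^{4}}{3}\,A_1\,B_2 \;=\; \frac{2}{3}\,A_1\,B_2,
\end{equation*}
which is the desired inequality. There is no serious obstacle: the whole proof reduces to choosing the correct weighting $|\nabla b|^{\pm 1}$ in Cauchy--Schwarz and applying the trace-free pointwise bound. The only care needed is bookkeeping the factors of $r$ and the conversion between $|\nabla b|$ and $|\nabla b^{2}|$ so that the right-hand side matches the definitions of $A_1$ and $B_2$ exactly.
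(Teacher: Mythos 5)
Your proof is correct and follows the paper's route: express $rA_1'$ via Lemma~\ref{l:A1p} as $\tfrac{1}{2}r^{-2}\int_{b=r}\bB(\nn,\nn)$, apply Cauchy--Schwarz on $\{b=r\}$ with the weight $|\nabla b|$ to pull out $A_1$, and then use a pointwise bound of $\bB(\nn,\nn)^2$ by $\tfrac{2}{3}|\bB|^2$ together with $|\nabla b^2|^2=4r^2|\nabla b|^2$ to produce $B_2$. The only difference is in how the pointwise bound is established: the paper's Lemma~\ref{l:CS} chooses coordinates with $e_1=\nn$ and exhibits $|C|^2$ as a sum of squares dominating $\tfrac{3}{2}C_{11}^2$, whereas you write $\bB(\nn,\nn)=\langle\bB,\nn\otimes\nn-\tfrac{1}{3}g\rangle$ (using that $\bB$ is trace-free) and apply Cauchy--Schwarz on symmetric tensors with $|\nn\otimes\nn-\tfrac{1}{3}g|^2=\tfrac{2}{3}$ --- an equivalent and arguably tidier derivation, which however skips the intermediate estimate $|C(v,v)|^2\leq|C(v)|^2$ that the paper also records in Lemma~\ref{l:CS}.
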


\vskip1mm
The proof uses an elementary linear algebra fact (that is usually used for the  Kato inequality for harmonic functions, see, e.g., lemma $2.2$ in \cite{MW1}, but is used differently here):

\begin{Lem}	\label{l:CS}
If $C$ is a trace-free three-by-three  matrix and $v$ is a unit vector, then
\begin{align}
	|C(v,v)|^2\leq |C(v)|^2  \leq \frac{2}{3} \, |C|^2 \, .
\end{align}
\end{Lem}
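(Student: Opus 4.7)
The plan is to prove the two inequalities separately, noting that the lemma is implicitly about symmetric trace-free matrices (which is the only case used in the application, where $C = \bB$ is the trace-free Hessian).

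For the first inequality, the observation is simply that $C(v,v) = \langle C v, v\rangle$, so Cauchy--Schwarz together with $|v|=1$ gives
\[
|C(v,v)|^2 = |\langle C v, v\rangle|^2 \leq |C(v)|^2 |v|^2 = |C(v)|^2.
\]

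For the second inequality, the plan is to diagonalize. Since $C$ is symmetric, choose an orthonormal eigenbasis $e_1, e_2, e_3$ with eigenvalues $\lambda_1, \lambda_2, \lambda_3$. The trace-free condition becomes $\lambda_1 + \lambda_2 + \lambda_3 = 0$, and $|C|^2 = \lambda_1^2 + \lambda_2^2 + \lambda_3^2$. Expanding $v = \sum c_i e_i$ with $\sum c_i^2 = 1$ gives $|C(v)|^2 = \sum c_i^2 \lambda_i^2 \leq \max_i \lambda_i^2$, so it suffices to bound the largest eigenvalue square by $\tfrac{2}{3} |C|^2$. Assuming the maximum is achieved by $\lambda_1$, I would use $\lambda_1 = -(\lambda_2+\lambda_3)$ and Cauchy--Schwarz to get $\lambda_1^2 \leq 2(\lambda_2^2+\lambda_3^2)$; adding $2\lambda_1^2$ to both sides yields $3\lambda_1^2 \leq 2|C|^2$, which is exactly the desired bound. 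The constant $\tfrac{2}{3}$ is sharp, attained by the eigenvalue pattern $(2,-1,-1)$ up to scale, which reflects the extremal rotationally symmetric trace-free tensor in three dimensions.

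There is no real obstacle: the argument is pure linear algebra. The only point worth flagging is the tacit symmetry assumption on $C$. Without it the factor $\tfrac{2}{3}$ fails (e.g.\ for a rank-one nilpotent trace-free matrix), but this is harmless since the lemma is invoked only for the symmetric tensor $\bB$.
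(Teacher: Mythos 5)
Your proof is correct, and you take a genuinely different route from the paper. The paper fixes an orthonormal basis with $e_1=v$, expands $|C|^2$ directly in coordinates, substitutes $C_{33}=-(C_{11}+C_{22})$, and then completes the square to read off $|C|^2\geq \tfrac{3}{2}\left(C_{11}^2+C_{12}^2+C_{13}^2\right)$. You instead diagonalize $C$ in an eigenbasis, reduce to bounding the top eigenvalue via $\lambda_1^2\leq 2(\lambda_2^2+\lambda_3^2)$, and then close the estimate by adding $2\lambda_1^2$. Your spectral argument is cleaner and makes the extremal pattern $(2,-1,-1)$ immediately visible, while the paper's basis-adapted completion of the square avoids any appeal to the spectral theorem and stays entirely at the level of matrix entries. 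Both arguments use symmetry of $C$: you invoke it explicitly through diagonalization, and the paper uses it implicitly when identifying $|C(v)|^2$ with $C_{11}^2+C_{12}^2+C_{13}^2$ (rather than $C_{11}^2+C_{21}^2+C_{31}^2$) and when writing $\sum_{i\neq j}C_{ij}^2=2\sum_{i<j}C_{ij}^2$. Your remark that the factor $\tfrac{2}{3}$ fails without symmetry (e.g.\ for a rank-one nilpotent trace-free matrix) is a worthwhile observation, and your first inequality via Cauchy--Schwarz on $\langle Cv,v\rangle$ is equivalent to the paper's coordinate version $C_{11}^2\leq C_{11}^2+C_{12}^2+C_{13}^2$.
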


\begin{proof}
Choose an orthonormal basis $e_1 , e_2 , e_3$  so that  $e_1 = v$.  It follows that
\begin{align}
	 |C(v,v)|^2 =C_{11}^2    \leq  C_{11}^2 + C_{12}^2 + C_{13}^2 = |C(v)|^2 \, .
\end{align}
Expanding $|C|^2$ and using that $C_{33} = - (C_{11} + C_{22})$, we have
\begin{align}
	|C|^2 &=  C_{11}^2 + C_{22}^2 + C_{33}^2 + \sum_{i \ne j} C_{ij}^2 = C_{11}^2 + C_{22}^2 + (C_{11} + C_{22})^2 
	+ 2\, \sum_{i < j} C_{ij}^2  \notag \\
	&= 2\, C_{11}^2 + 2\, C_{22}^2 + 2 \, C_{11} \, C_{22} + 2\, (C_{12}^2 + C_{13}^2+ C_{23}^2) \\
	&= \frac{3}{2} \, C_{11}^2 +  2\, (C_{12}^2 + C_{13}^2)
	+ \left( \sqrt{2} \, C_{22} + \frac{1}{\sqrt{2}} \, C_{11} 
	\right)^2  + 2\,   C_{23}^2  \, . 
	\notag
\end{align}
\end{proof}

\vskip1mm
The lemma is sharp on the three-by-three diagonal matrix with entries $2$, $-1$ and $-1$.

\begin{proof}[Proof of Lemma \ref{c:csCS}]
To see this, use the first claim in Lemma \ref{l:A1p},  the Cauchy-Schwarz inequality and then  Lemma \ref{l:CS} to get that
\begin{align}   \label{e:cscrucial}
  4 \, (r\, A_1'(r))^2 
  &= \left(r^{-2}\int_{b=r} \bB(\nn,\nn)\right)^2 \notag \\
  &\leq  \left( r^{-2} \, \int_{b=r} |\nabla b|^2\right) \, 
  \left( 	r^{-2} \, \int_{b=r} \frac{[ \bB(\nn,\nn)]^2}{|\nabla b|^2}
  \right) 
   \\
  &\leq  A_1 (r) \,\left( \frac{2}{3} \int_{b=r}\frac{|\bB|^2}{|\nabla b|^2}\right) = \frac{8}{3} \, A_1 (r) \,  B_2 (r) \,  . \notag
\end{align}

\end{proof}

\subsection{The growth of $A_1$}
We will now turn to proving a lower bound for the growth of $A_1(r)$ when $S \geq 0$, 
  the level sets of $b$ are connected, and there exist $\delta > 0$ and  $r_0 > 0$ so that 
\begin{align}
	A_1 (r_0) > 4 \, \pi + \delta \, .
\end{align}
Proposition \ref{c:MWa} gives that   $A_1$ is increasing for $r\geq r_0$ with
 \begin{align}
 	(\log (A_1(r)- 4\, \pi))' \geq r^{-1} \, , 
\end{align}
 so that integrating this from $r_0$ to $r$ gives that
 \begin{align}	\label{e:lowerA}
 	A_1(r) \geq 4\, \pi + \delta \, \frac{r}{r_0} \, .
 \end{align}
 Define a continuous positive function
  \begin{align}
 	a(r) = r \, \left( \log A_1 \right) '(r) 
\end{align}
that measures the rate of polynomial growth of $A_1(r)$.  To get that $A_1$ grows quadratically, we will show that $a(r)$ rapidly approaches 
$2$ as $r$ grows.

\vskip2mm
The next proposition says that the continuous function $a(r)$   satisfies the differential inequality
\begin{align}
	r\, a' \geq  1 - a^2/4 - 4\, \pi/A_1  	\label{e:dia}
\end{align}
in an integral sense.

 \begin{Pro}	\label{p:aprime}
 If $r_1 < r_2$ are regular values of $b$ with $r_0 \leq r_1$, then 
 \begin{align}
 	a(r_2) - a(r_1) \geq  \int_{r_1}^{r_2} \left( 1 - \frac{a^2(r)}{4} - \frac{4\, \pi}{A_1 (r)} 
	\right) \, \frac{dr}{r} \, .
 \end{align}
 \end{Pro}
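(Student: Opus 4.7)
The plan is to extract a pointwise (a.e.)\ differential inequality for $a(s)$ from the mechanism behind Proposition~\ref{c:MWa} and Lemma~\ref{c:csCS}, then integrate.

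First, I would revisit the proof of Proposition~\ref{p:insert}: the divergence theorem, Proposition~\ref{l:sternbochA}, and the coarea formula yield the identity
\begin{align*}
	s\, B_1(s) = \int_0^s \Bigl( S_1(t) + B_2(t) + 2\, B_1(t) - 2 \int_{b=t} K \Bigr)\, dt,
\end{align*}
and Gauss--Bonnet together with connectedness gives $2\int_{b=t} K \leq 8\pi$. Setting $\Phi(s) = s\, A_1'(s) = \tfrac{1}{2}(B_1(s) - 2\, A_1(s))$, the right-hand side is locally absolutely continuous in $s$. Differentiating at a.e.\ $s$ and substituting $B_1 = 2\, A_1 + 2\, \Phi$ then simplifies to
\begin{align*}
	s\, \Phi'(s) \geq \tfrac{1}{2}\bigl(S_1(s) + B_2(s)\bigr) + A_1(s) - 4\pi \quad \text{a.e.}
\end{align*}

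Next, I would discard the nonnegative $S_1/2$ term and apply Lemma~\ref{c:csCS} in the form $B_2 \geq \tfrac{3\, \Phi^2}{2\, A_1}$, giving
\begin{align*}
	s\, \Phi'(s) \geq \frac{3\, \Phi^2(s)}{4\, A_1(s)} + A_1(s) - 4\pi.
\end{align*}
Since $A_1' = \Phi/s$, direct computation gives $s\, a'(s) = s\, \Phi'(s)/A_1(s) - a^2(s)$ a.e., where $a = \Phi/A_1$ is locally absolutely continuous. Dividing the previous inequality by $A_1$ and combining yields the pointwise bound
\begin{align*}
	s\, a'(s) \geq 1 - \frac{a^2(s)}{4} - \frac{4\pi}{A_1(s)} \quad \text{a.e.}
\end{align*}

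Finally, integrating $a'(s)\, ds$ from $r_1$ to $r_2$ gives the proposition, since $a(r_2) - a(r_1) = \int_{r_1}^{r_2} a'(s)\, ds = \int_{r_1}^{r_2} s\, a'(s)\, ds/s$. The main obstacle is the regularity justification in the first step: to differentiate the identity for $s\, B_1(s)$ at a.e.\ $s$ one needs $B_2$ to be locally integrable, so that the integrand is an $L^1_{\text{loc}}$ function and $s\, B_1(s)$ is absolutely continuous with the expected a.e.\ derivative. The restriction to regular values of $r_1, r_2$ in the statement is precisely what is needed for the endpoint quantities $a(r_i)$ to be defined in the conventional sense (cf.\ Appendix~\ref{s:app2}).
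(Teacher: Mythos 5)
Your calculation is correct, and it produces exactly the intended pointwise differential inequality $s\,a'(s)\geq 1-\tfrac{1}{4}a^2(s)-\tfrac{4\pi}{A_1(s)}$ almost everywhere, which upon integration gives the proposition. But the route is genuinely different from the one in the paper. You start from the integral identity in Proposition~\ref{p:insert}, differentiate it in $s$, substitute $B_1=2(\Phi+A_1)$, apply Lemma~\ref{c:csCS} in the form $B_2\geq \tfrac{3\Phi^2}{2A_1}$, and then compute $s\,a'$ by the quotient rule; the a.e.\ differentiation requires knowing that $B_1$ and $\Phi=sA_1'$ (and hence $a=\Phi/A_1$) are locally absolutely continuous, which you correctly flag as the regularity obstacle. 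Local integrability of $B_2$ does follow a posteriori from the integral identity itself (all the other terms being continuous, hence locally integrable), so the chain of absolute continuities goes through --- but this is a real verification you would need to carry out.

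The paper instead sidesteps pointwise differentiation entirely: Lemma~\ref{l:defineVforme} introduces the $C^1$ vector field $V=b^{-2}A_1(b)^{-1}\nabla|\nabla b^2|$, shows $\int_{b=r}\langle V,\nn\rangle=2a(r)+2$, and bounds $r\int_{b=r}\tfrac{\dv V}{|\nabla b|}$ from below at regular values using Proposition~\ref{l:sternbochA}, Gauss--Bonnet, and Lemma~\ref{c:csCS}. The difference $a(r_2)-a(r_1)$ then equals $\tfrac12\int_{r_1<b<r_2}\dv V$ by the divergence theorem, and the coarea formula plus the pointwise lower bound on $\dv V/|\nabla b|$ gives the claim directly. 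This formulation never asks whether $B_1$ or $a$ are differentiable, so the regularity discussion you have to carry through disappears; the price is that one has to cook up the right vector field. Your approach is the infinitesimal (ODE) version of theirs, and makes the underlying differential inequality \eqr{e:dia} visible; the paper's is the integrated-by-parts version, which is technically lighter.
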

 
 \vskip1mm
 The proposition will follow from applying the divergence theorem and co-area formula to a well-chosen vector field $V$ that is defined in the next lemma.
 
 \begin{Lem}	\label{l:defineVforme}
If we define a $C^1$  vector field 
$V= b^{-2} \,  \frac{ \nabla |\nabla b^2|}{A_1 (b)} $, then at a regular value $r$
  \begin{align}	
	     \int_{b=r} \langle V , \nn \rangle & = 2 \, a(r) + 2  \, , \label{e:defV} \\
	r\, \int_{b=r} \frac{\dv \, V}{|\nabla b|} &\geq   
	 2  - \frac{1}{2} \, a^2(r)
	 - \frac{8\, \pi }{A_1(r)} 
	\, . \label{e:usethisV}
\end{align}
  \end{Lem}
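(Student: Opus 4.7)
The plan is to verify both identities by direct calculation from formulas developed earlier in the section. For \eqr{e:defV}, the key observation is that $\nabla |\nabla b^2| = \Hess_{b^2}(\nn,\cdot)$, which follows from $|\nabla b^2| = 2b|\nabla b|$ together with the formula for $\nabla|\nabla b|$ used in the proof of Lemma \ref{l:A1p}. Since the scalar factor $b^{-2}/A_1(b)$ is constant on $\{b=r\}$, the flux integral collapses to $B_1(r)/A_1(r)$; inserting $B_1 = 2rA_1' + 2A_1$ from Lemma \ref{l:A1p} and the definition of $a$ gives exactly $2a(r)+2$.

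For \eqr{e:usethisV}, I would expand
\begin{align*}
  \dv V = b^{-2}A_1(b)^{-1}\,\Delta|\nabla b^2| - b^{-3}A_1(b)^{-1}(2+a(b))\,|\nabla b|\,\Hess_{b^2}(\nn,\nn),
\end{align*}
where the prefactor $(2+a(b))$ is produced by the chain rule together with the identity $a = b(\log A_1)'$. Substituting Proposition \ref{l:sternbochA} for $\Delta|\nabla b^2|/|\nabla b|$ produces integrands that, after collecting coefficients, are recognizable as $S_1$, $B_1$, $B_2$, and $\int_{b=r} K$, yielding
\begin{align*}
  r\int_{b=r}\frac{\dv V}{|\nabla b|} = A_1(r)^{-1}\Big(S_1(r) + B_2(r) + (1-a(r))\,B_1(r) - 2\int_{b=r} K\Big).
\end{align*}

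To close the estimate I would apply four inputs in order: $S \geq 0$ to drop $S_1$; Gauss-Bonnet on the connected level set (a sphere, by the topological assumptions as in Proposition \ref{p:insert}) to bound $\int_{b=r} K \leq 4\pi$; the identity $B_1 = (2+2a)A_1$ from Lemma \ref{l:A1p} to rewrite $(1-a)B_1/A_1 = 2(1-a^2)$; and the Kato-type inequality $B_2 \geq \tfrac{3}{2}a^2 A_1$ extracted from Lemma \ref{c:csCS} to absorb $\tfrac{3}{2}a^2$ against the remaining $-2a^2$. What is left is precisely $2 - a^2/2 - 8\pi/A_1$.

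The main obstacle is the bookkeeping: several $\Hess_{b^2}(\nn,\nn)$ contributions---from the $3\Hess_{b^2}(\nn,\nn)/(2b^2)$ piece of Proposition \ref{l:sternbochA} and from differentiating the scalar prefactor of $V$---must combine into coefficient $(1-a)$ on $B_1$, and the Kato inequality must then leave exactly margin $a^2/2$ after absorbing $2a^2$. Both are dictated by the specific normalization chosen for $V$, so the real work is checking that the numerical factors agree rather than overcoming any substantive analytic difficulty; the only genuinely geometric input beyond the formulas already in Section 1 is the Kato-type inequality of Lemma \ref{c:csCS}, which is exactly strong enough to convert $-2a^2$ into $-a^2/2$.
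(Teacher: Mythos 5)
Your argument is correct and follows essentially the same route as the paper: compute the flux of $V$ via $\langle\nabla|\nabla b^2|,\nn\rangle=\Hess_{b^2}(\nn,\nn)$ and Lemma \ref{l:A1p}, expand $\dv V$, substitute Proposition \ref{l:sternbochA}, and then invoke $S\geq 0$, Gauss--Bonnet, $B_1=(2+2a)A_1$, and the Kato--type inequality from Lemma \ref{c:csCS}; every coefficient you report matches the paper's. One small caution: you parenthetically assert the connected level set is a sphere, but the paper uses only connectedness, for which Gauss--Bonnet already gives $\int_{b=r}K=2\pi\,\chi\leq 4\pi$ without needing $\chi=2$.
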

 
 \begin{proof}
 Using the definitions of $V$ and $B_1$, we see that 
  \begin{align}
 	\int_{b=r} \langle V , \nn \rangle = \frac{1}{A_1 (r)} \, r^{-2} \, \int_{b=r} \langle \nabla |\nabla b^2| , \nn \rangle = \frac{B_1(r)}{A_1(r)} \, .
	\label{e:Vnonr}
 \end{align}
 Therefore,   Lemma \ref{l:A1p}
gives for each $r$ that
  \begin{align}	
	2\, a(r) + 2&= \frac{B_1 (r)}{A_1 (r)} =    \int_{b=r} \langle V , \nn \rangle \, ,  
\end{align}
giving the first claim.

 We turn now to the second claim.
 Define a $C^1$ function   $v   = \frac{1}{A_1 (b)}$, so that
$V= b^{-2} \, v \,  \nabla |\nabla b^2|$.
 The product rule and the chain rule give that
 \begin{align}
	\frac{\dv \, V}{|\nabla b|} &= b^{-2} \, v \,  \frac{\Delta |\nabla b^2|}{|\nabla b|} + b^{-2} \, \langle \frac{\nabla v}{|\nabla b|} , \nabla |\nabla b^2| \rangle - 2 \, b^{-3} \, v \, 
	\langle \nn , \nabla |\nabla b^2| \rangle \notag \\
	 &=2\,  b^{-1} \, v \,  \frac{ \Delta |\nabla b^2| }{|\nabla b^2|}- b^{-3} \,  a(b) \, v \, \langle \nn , \nabla |\nabla b^2| \rangle - 2 \, b^{-3} \, v \, 
	\langle \nn , \nabla |\nabla b^2| \rangle \, .
\end{align}
Using Proposition \ref{l:sternbochA} and $S \geq 0$,  this becomes
\begin{align}
	\frac{\dv \, V}{|\nabla b|} &\geq \frac{v}{b} \left( -  2\, K+  \frac{ |\bB|^2}{|\nabla b^2|^2} +3 \, \frac{  \Hess_{b^2} (\nn , \nn)  }{  b^2 }
	\right) 
	  - (2 + a(b)) \, b^{-3} \, v \, 
	 \Hess_{b^2} (\nn , \nn) \, .
\end{align}
Thus, since  $v \equiv \frac{1}{A_1 (r)}$ is constant on  $\{ b =r\}$, we see at regular values $r > r_0$ that
\begin{align}
	r\, \int_{b=r} \frac{\dv \, V}{|\nabla b|} &\geq  \int_{b=r} 
	\frac{1}{A_1 (r)} \, \left(
	-  2\, K+  \frac{ |\bB|^2}{|\nabla b^2|^2} + 
	   (1-a(r)) \, 
	\frac{   \Hess_{b^2}(\nn,\nn)}{r^2} \right)
	 \notag \\
	&= \frac{B_2(r)}{A_1 (r)} + (1-a(r)) \, \frac{B_1(r)}{A_1(r)}  
	 - \frac{2}{A_1(r)} \, \int_{b=r} K 
	\, .
\end{align}
Using that $\frac{B_1}{A_1} = 2(a+1)$ and 
 using the connectedness of the level sets and Gauss-Bonnet to bound $ \int_{b=r} K 
 \leq 4\, \pi$, we see that
  \begin{align}
	r\, \int_{b=r} \frac{\dv \, V}{|\nabla b|} &\geq   
	 \frac{B_2(r)}{A_1 (r)} + 2\, (1-a(r)) \, (1+ a(r)) 
	 - \frac{8\, \pi }{A_1(r)} \notag \\
	 &=  \frac{B_2(r)}{A_1 (r)} + 2 - 2\,  a^2(r) 
	 - \frac{8\, \pi }{A_1(r)} 
	 \end{align}
Lemma \ref{c:csCS}
gives that
\begin{align}
	a^2&\leq  \frac{2}{3} \, \frac{B_2}{A_1} \, ,
\end{align}
so we get that
\begin{align}	 
	r\, \int_{b=r} \frac{\dv \, V}{|\nabla b|} &\geq   
	 2  - \frac{1}{2} \, a^2(r)
	 - \frac{8\, \pi }{A_1(r)} 
	\, ,
\end{align}
completely the proof.
 \end{proof}

\begin{proof}[Proof of Proposition \ref{p:aprime}]
The first claim in Lemma \ref{l:defineVforme}, the divergence theorem, and the co-area formula give  that
 \begin{align}
 	a(r_2) - a(r_1)=  \frac{1}{2} \, \int_{r_1 < b < r_2} \dv \, V = \frac{1}{2} \,  \int_{r_1}^{r_2} \int_{b=r} \frac{ \dv \, V}{|\nabla b|} \, dr \, .
 	\label{e:dvplusco}
 \end{align}
The proposition follows from using the second claim in Lemma \ref{l:defineVforme} to get a lower bound for the right-hand side.
\end{proof}

The differential inequality \eqr{e:dia} would force $a$ to rapidly approach  $2$, giving the desired quadratic growth.  We will need to do this in an integral sense because   we only have the differential inequality in an integral sense.  The next corollary  is designed for this.

\begin{Cor}	\label{c:effC}
If $r_1 < r_2$ are regular values of $b$ with $r_0 \leq r_1$ and $a (r) \leq  2$ for $r_1 \leq r \leq r_2$, then 
\begin{align}
	   a(r_2)  \geq   2+ \sqrt{r_1/r_2} \, a(r_1)    - 2\, \sqrt{r_1/r_2}  - \frac{4\, \pi}{\sqrt{r_2}} \, \int_{r_1}^{r_2} \frac{r^{- \frac{1}{2}}}{A_1 (r)} \, dr \, . 
\end{align}
\end{Cor}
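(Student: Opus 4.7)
The plan is to exploit the hypothesis $a \leq 2$ to linearize the quadratic term in Proposition \ref{p:aprime}, and then solve the resulting linear integral inequality via the integrating factor $\sqrt{r}$.

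First I would observe that $a(r) > 0$ on $[r_1,r_2]$: since $r_0 \leq r_1$, Proposition \ref{c:MWa} together with $A_1(r_0) > 4\,\pi$ forces $A_1(r) > 4\,\pi$ and $r\,A_1'(r) > 0$ throughout, so $a(r) > 0$.  Setting $h(r):=2-a(r) \in [0,2)$, the elementary factorization
\[
1-\frac{a^2(r)}{4} \;=\; \frac{(2-a(r))(2+a(r))}{4} \;\geq\; \frac{2-a(r)}{2} \;=\; \frac{h(r)}{2}
\]
(valid because $2+a\geq 2$), combined with Proposition \ref{p:aprime} and the identity $a(r_2)-a(r_1) = h(r_1)-h(r_2)$, yields the linear integral inequality
\[
h(r_2) \;\leq\; h(r_1) - \int_{r_1}^{r_2} \frac{h(r)}{2\,r}\,dr + 4\,\pi \int_{r_1}^{r_2} \frac{dr}{r\,A_1(r)}.
\]

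Next I would solve this by the integrating factor $\sqrt{r}$.  In differential form, the above reads $r\,h'(r) + h(r)/2 \leq 4\,\pi/A_1(r)$, and the algebraic identity
\[
(\sqrt{r}\,h)'(r) \;=\; r^{-1/2}\bigl(r\,h'(r) + h(r)/2\bigr)
\]
converts it to $(\sqrt{r}\,h)' \leq 4\,\pi/(\sqrt{r}\,A_1)$.  Integrating from $r_1$ to $r_2$ gives
\[
\sqrt{r_2}\,h(r_2) - \sqrt{r_1}\,h(r_1) \;\leq\; 4\,\pi \int_{r_1}^{r_2} \frac{dr}{\sqrt{r}\,A_1(r)}.
\]
Substituting back $h = 2-a$, dividing through by $\sqrt{r_2}$, and rearranging produces precisely the claimed lower bound for $a(r_2)$.

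The main obstacle is that Proposition \ref{p:aprime} is only asserted at regular values and in integral form, so the ODE-style manipulation above needs to be justified without assuming $a$ is differentiable a priori.  Since $A_1 \in C^1$ and $A_1 > 0$ by Lemma \ref{l:A1p}, the function $a$ is continuous, so by continuity of both sides Proposition \ref{p:aprime} extends to \emph{all} $r_1 < r_2$ in $[r_0,\infty)$.  Applying the extended inequality with a variable upper endpoint shows that $r \mapsto h(r) + \int_{r_1}^{r} h(s)/(2\,s)\,ds - 4\,\pi\int_{r_1}^{r} ds/(s\,A_1(s))$ is nonincreasing, and hence absolutely continuous; this legitimizes multiplying by the integrating factor and integrating, completing the proof.
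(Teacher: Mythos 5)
Your approach is essentially the same as the paper's, recast at the level of the one-variable function $a$ rather than the vector field $V$. The linearization $1 - a^2/4 = (2-a)(2+a)/4 \geq (2-a)/2$ is the paper's $2 - a^2/2 \geq 2-a$, and your integrating factor $\sqrt{r}$ is exactly what the paper implements by replacing $V$ with $\sqrt{b}\,V$ and applying the divergence theorem directly. The algebra at the end also checks out and produces the stated inequality.

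The one genuine flaw is the sentence ``is nonincreasing, and hence absolutely continuous.'' A continuous monotone function need not be absolutely continuous (the complement of the Cantor function is the standard counterexample), so monotonicity alone does not license the step $(\sqrt{r}\,h)' \leq 4\pi/(\sqrt{r}\,A_1)$ followed by integration. To repair this you could do one of the following: (i) avoid differentiating $a$ altogether by applying the divergence theorem to $\sqrt{b}\,V$, which is what the paper does and is why it works with the vector field rather than with $a$ as a function of $r$; (ii) run a Riemann-sum/Gr{\"o}nwall partition argument: write $\sqrt{s_i}\,h(s_i)-\sqrt{s_{i-1}}\,h(s_{i-1}) = \sqrt{s_i}\bigl(h(s_i)-h(s_{i-1})\bigr) + \bigl(\sqrt{s_i}-\sqrt{s_{i-1}}\bigr)h(s_{i-1})$, bound the first term by the extended integral inequality, telescope, and pass to the mesh-zero limit using uniform continuity of $h$; or (iii) show that $a$ is in fact locally Lipschitz, which follows since $a = \tfrac{B_1}{2A_1}-1$ and both $A_1$ and $s\,B_1(s)$ are locally Lipschitz by the same divergence-theorem argument the Appendix uses for $A_1$. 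Any of these closes the gap; without one of them the last step of your proof is not justified as written.
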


\begin{proof}
Let the vector field $V$ be as in Lemma \ref{l:defineVforme}.
Since we work in an interval where $0 < a \leq 2$, we have that 
\begin{align}
	(2 - a^2/2) \geq 2-a
\end{align}
 and, thus, the second claim in Lemma \ref{l:defineVforme}
  gives that
 \begin{align}
 r\, \int_{b=r} \frac{\dv \, V}{|\nabla b|} &\geq   
	 2  -  a(r)
	 - \frac{8\, \pi }{A_1(r)} 
	\, .
\end{align}
Therefore 
since $\dv \, (\sqrt{b} \, V ) = \frac{1}{2 \, \sqrt{b}} \, \langle \nabla b , V \rangle + \sqrt{b} \, \dv \, V$, we see  that
\begin{align}	 \label{e:e220}
	  \int_{b=r} \frac{\dv \, (\sqrt{b} \, V)}{|\nabla b|}  &= \sqrt{r} \, \int_{b=r} \frac{\dv \, V}{|\nabla b|} 
	+ \frac{1}{2} \, r^{ - \frac{1}{2} } \,  \int_{b=r} \langle V , \nn \rangle \notag \\
	& \geq   r^{ - \frac{1}{2}} \, 
	\left( 2 - a(r)
	 - \frac{8\, \pi }{A_1(r)} + \frac{1}{2} \,  \int_{b=r} \langle V , \nn \rangle \right)   \\
	 &= r^{ -  \frac{1}{2} } \, \left( 3   
	 - \frac{8\, \pi }{A_1(r)}   \right)  
	\, . \notag
\end{align}
where the  second equality used \eqr{e:defV}. Thus, \eqr{e:defV}, the divergence theorem,  the co-area formula and \eqr{e:e220} give that
\begin{align}
	2\, \sqrt{r_2} \, (a(r_2)+1) - 2\, \sqrt{r_1} ( a(r_1)+1) & = 
	 \int_{b=r_2} \langle \sqrt{b} \, V , \nn \rangle -  \int_{b=r_1} \langle \sqrt{b} \, V , \nn \rangle = \int_{r_1 \leq b \leq r_2} \dv \, (\sqrt{b} \, V) \notag \\
	 &= \int_{r_1}^{r_2} \frac{ \dv \, (\sqrt{b} \, V)}{|\nabla b|} \, dr  \geq  \int_{r_1}^{r_2}r^{ -  \frac{1}{2} } \, \left( 3   
	 - \frac{8\, \pi }{A_1(r)}   \right)  \, dr \\
	 &= 6\, \sqrt{r_2} - 6\, \sqrt{r_1} - 8\, \pi \, \int_{r_1}^{r_2} \frac{r^{- \frac{1}{2}}}{A_1 (r)} \, dr \, . \notag
\end{align}
Simplifying this gives the claim.
\end{proof}

\subsection{Quadratic growth}

We will show next that $a(r)$ is rapidly approaching $2$.

\begin{Cor}	\label{c:alowb}
There is a constant $c = c (r_0 , \delta) > 0$ so that for all $r \geq r_0$
\begin{align}	\label{e:ar2}
	a(r) \geq 2 - \frac{c}{\sqrt{r}} \, .
\end{align}
\end{Cor}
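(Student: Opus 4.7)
The plan is to apply Corollary \ref{c:effC} on a well-chosen interval $[r_1, r]$ on which $a \leq 2$ holds throughout, and then use the already-established linear lower bound \eqr{e:lowerA} for $A_1$ to tame the residual integral. First, if $a(r) \geq 2$, the desired inequality is trivial, so I may assume $a(r) < 2$. Since $A_1$ is continuously differentiable by Lemma \ref{l:A1p}, the function $a = r(\log A_1)'$ is continuous, and I define
\begin{align*}
r_1 := \sup\{\, s \in [r_0, r] : a(s) \geq 2 \,\},
\end{align*}
with the convention $r_1 := r_0$ if the set is empty. By continuity of $a$, one has $a \leq 2$ throughout $[r_1, r]$, and at the left endpoint either $r_1 = r_0$ or $a(r_1) = 2$.

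Modulo approximating $r_1$ and $r$ by nearby regular values (dense by Sard) and passing to the limit, Corollary \ref{c:effC} yields
\begin{align*}
a(r) \geq 2 + \sqrt{r_1/r}\,(a(r_1) - 2) - \frac{4\pi}{\sqrt{r}} \int_{r_1}^{r} \frac{s^{-1/2}}{A_1(s)}\, ds.
\end{align*}
The boundary term vanishes if $r_1 > r_0$ (since then $a(r_1) = 2$), and if $r_1 = r_0$ then $a(r_0) \geq 0$ gives $\sqrt{r_1/r}\,(a(r_1) - 2) \geq -2\sqrt{r_0/r}$. For the integral, the linear lower bound $A_1(s) \geq \delta s / r_0$ from \eqr{e:lowerA} gives
\begin{align*}
\int_{r_1}^{r} \frac{s^{-1/2}}{A_1(s)}\, ds \leq \frac{r_0}{\delta} \int_{r_0}^{r} s^{-3/2}\, ds \leq \frac{2\sqrt{r_0}}{\delta},
\end{align*}
so the integral term is bounded above by $8\pi \sqrt{r_0}/(\delta \sqrt{r})$. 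Combining these estimates gives $a(r) \geq 2 - c/\sqrt{r}$ with $c = c(r_0, \delta) = 2\sqrt{r_0} + 8\pi \sqrt{r_0}/\delta$.

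The only real subtlety is that $a$ is not a priori monotone and could cross the value $2$ many times; this is bypassed by choosing $r_1$ as the last crossing before $r$, after which $a$ stays $\leq 2$ by construction so that the hypothesis of Corollary \ref{c:effC} is met. The minor technicality that $r_1$ and $r$ must be regular values is handled by approximation, since every quantity that enters depends continuously on the endpoints.
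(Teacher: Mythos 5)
Your proof is correct and follows essentially the same strategy as the paper: pick $r_1$ as the last crossing of the value~$2$ before $r$ (or $r_0$ if there is none), apply Corollary~\ref{c:effC} on $[r_1,r]$, and control the residual integral with the linear lower bound \eqr{e:lowerA}. The paper splits into two explicit cases instead of defining $r_1$ by a supremum, but the case analysis is identical in substance, including the use of $a(r_0)\geq 0$ in the non-crossing case and the approximation of non-regular endpoints by nearby regular values.
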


\begin{proof}
Since $a(r)$ is continuous and the regular values of $b$ are dense, we need only show \eqr{e:ar2} when $r$ is a regular value of $b$.  Moreover, we can assume that  
  $a(r) < 2$ since there is otherwise nothing to prove. 
  
   Suppose now that $r_1 < r_2$ are regular value with
$r_1 \geq r_0$ and so that $a\leq 2$ on $[r_1 , r_2]$.
 Using
 \eqr{e:lowerA},    there is a fixed  $c_1 > 0$, depending on $r_0$ and $\delta$,  so that 
 \begin{align}
 	A_1 (r) \geq c_1 \, r
\end{align}
 and, consequently, 
 \begin{align}
 	\frac{4\, \pi}{\sqrt{r_2}} \, \int_{r_1}^{r_2} \frac{r^{- \frac{1}{2}}}{A_1 (r)} \, dr \leq  \frac{c_2}{\sqrt{r_2}} \, ,
 \end{align}
 where $c_2$ also depends on $r_0 , \delta$.  Thus, the conclusion of Corollary \ref{c:effC} gives that
 \begin{align}	\label{e:fromceff}
	   a(r_2)  \geq   2+ \sqrt{r_1/r_2} \,( a(r_1)    - 2)  - \frac{c_2}{\sqrt{r_2}} \, . 
\end{align}
We will now consider two cases.  

Suppose first that $a < 2$ on $[r_0 , r_2]$, so we can take $r_1 = r_0$ and \eqr{e:fromceff} gives that
\begin{align}	\label{e:fromceffA}
	   a(r_2)  \geq   2+ \sqrt{r_0/r_2} \,( a(r_0)    - 2)  - \frac{c_2}{\sqrt{r_2}} \, .
\end{align}
In particular, \eqr{e:ar2} holds in this case with $c = c_2 - 2 \, \sqrt{r_0}$.

In the remaining case, we have that $\max\,  \{ a(r) \, | \,  r_0 \leq r \leq r_2\} \geq 2$.  Since $a$ is continuous and $a(r_2) < 2$, there exists $r' \in [r_0 , r_2)$ so that
\begin{align}
	a(r') = 2 {\text{ and }} a(r) < 2 {\text{ for }} r' < r \leq r_2 \, .
\end{align}
If $r'$ is a regular value, then applying \eqr{e:fromceff}  with $r_1 = r'$ gives that
 \begin{align}	\label{e:fromceffB}
	   a(r_2)  \geq   2+ \sqrt{r_1/r_2} \,( a(r_1)    - 2)  - \frac{c_2}{\sqrt{r_2}} =  2 - \frac{c_2}{\sqrt{r_2}}\, . 
\end{align}
Thus,  \eqr{e:ar2} holds in this subcase with $c = c_2$.  In the more general case where $r'$ is not a regular value, then we use the density of regular values to choose a sequence of regular values $r_i' > r'$ with $r_i' \to r'$.  Since $a(r)$ is continuous, it follows that
\begin{align}
	\sqrt{r_i'/r_2}\, \, (a(r_i') - 2) \to 0 \, .
\end{align}
Therefore, arguing as above  shows that 
\eqr{e:ar2} holds again with $c = c_2$.
\end{proof}
  
  We can now prove the quadratic lower bound for the growth of $A_1(r)$.
 
\begin{proof}[Proof of Proposition \ref{t:cgrow}]
Since $M$ has one end and zero first betti number,  the level sets of $b$ are connected and their complement consists of a bounded component $p$ and an unbounded component where $b \to \infty$.  Thus, the results of this section apply.

Corollary \ref{c:alowb}
gives constants $c, r_0$ so that for all $r \geq r_0$, we have that $A_1 (r) > 0$ and 
\begin{align}	\label{e:ar2A}
	r\, \frac{A_1'(r)}{A_1(r)} \equiv a(r) \geq 2 - \frac{c}{\sqrt{r}} \, .
\end{align}
Integrating this, we conclude that
\begin{align}
	\log \frac{A_1 (r)}{A_1 (r_0)} = \int_{r_0}^r \frac{a(s)}{s} \, ds \geq \int_{r_0}^r \left( 2\, s^{-1} - c \, s^{ - \frac{3}{2}} \right) \, ds 
	\geq 2 \, \log \frac{r_1}{r_0} - 2 \,c \, r_0^{ - \frac{1}{2}} \, .
\end{align}
Exponentiating this gives that
\begin{align}
	\frac{A_1 (r)}{A_1 (r_0)} \geq \frac{ r^2}{r_0^2} \, \e^{ - 2\, c \, r_0^{ - \frac{1}{2}}} \, ,
\end{align}
which is the claimed quadratic growth of $A_1(r)$.
   \end{proof}

   \subsection{The proof of the gradient estimate}
   
   We will now prove the sharp gradient estimate $A_1(r) \leq 4 \, \pi$ and show that equality (even for just one $r$) implies that $M$ is Euclidean.
   As observed in lemma $3.1$ in \cite{CM4}, this implies that
   \begin{align}
   	\Vol \{ b = r\} \geq 4\, \pi \, r^{2} \, ,
   \end{align}	
   with equality if and only if $M$ is Euclidean.
   
   \begin{proof}[Proof of Theorem \ref{t:main1}]
   We prove first that $A_1  \leq 4\, \pi$.  We will argue by contradiction, so suppose instead that there is some $r_0 > 0$ with $A_1 (r_0) \geq \delta > 0$.
   We can then apply Proposition \ref{t:cgrow} to get for $r \geq r_0$ that
   \begin{align}
   	A_1 (r) \geq c \, r^2 \, , 
   \end{align}
   where $c = c (\delta , r_0) > 0$.
     Since $|\nabla u| = b^{-2} \, |\nabla b|$,  this gives a positive lower bound for the weighted averages of $|\nabla u|$
     \begin{align}
     	r^{-2} \, \int_{b=r} |\nabla u| \, |\nabla b| = r^{-2} \, A_1 (r) \geq c > 0 .
     \end{align}
This 
contradicts that $|\nabla u| \to 0$ at infinity, so we conclude that we must always have $A_1 \leq 4 \, \pi$.

We turn now to the case of equality, where there is some $r_0 > 0$ with $A_1 (r_0) = 4\, \pi$.   Proposition \ref{c:MWa}
gives for any $r$ that
	\begin{align}		\label{e:fromtheprop}
			r\, A_1'(r) &\geq A_1 (r) - 4\, \pi + \frac{1}{2\, r} \, \int_0^r \left( S_1 (s) + B_2 (s)\right)  \, ds  \, .
	\end{align}
We will use this a few times.  First, if we define $f(r) = r^{-1} (A_1(r) - 4\, \pi)$, then \eqr{e:fromtheprop} gives the differential inequality
\begin{align}
	 f' (r) =  r^{-1} \, A_1'(r) - r^{-2} \,   (A_1(r) - 4\, \pi) = r^{-2} \, \left( r \, A_1'(r) - (A_1(r) - 4\, \pi) \right) \geq 0 \, .
\end{align}
Since $f(r_0) = 0$, integrating this gives that 
\begin{align}
	f(r) \geq 0 {\text{ for }} r \geq r_0
\end{align}
 and, thus, $A_1 (r) \geq 4\, \pi$ for $r \geq r_0$.  
	 Since we have already established the opposite inequality $A_1 (r) \leq 4\pi$, we conclude  that
\begin{align}
	A_1(r) = 4\, \pi {\text{ and }} A_1'(r) = 0 {\text{ for all }} r \geq r_0 \, .
\end{align}
Combining this with \eqr{e:fromtheprop}, we see that $B_2(r) = 0$ for all $r$.  Therefore, the trace-free Hessian $B$ of $b^2$ vanishes identically on all of $M$.  By section one in \cite{ChC1}, this implies that $M$ is a cone and, since it is smooth, Euclidean.  
   \end{proof}

 \subsection{An average bound for scalar curvature}

\begin{proof}[Proof of Corollary \ref{t:main1S}]
Theorem \ref{t:main1} gives that $A_1 (2r) \leq 4\, \pi$.  In particular, we have that
\begin{align}
	  \int_r^{2r} A_1'(s) \, ds = A_1 (2r) - A_1(r) \leq 4 \, \pi \, .
\end{align}
Therefore, we can choose $s \in [r,2r]$ with $r \, A_1'(s) \leq 4\, \pi$.  
 Applying
Proposition \ref{c:MWa} with this $s$ 
gives that
	\begin{align}	
     8\, \pi &\geq  2\, r \, A_1'(s) \geq s\, A_1'(s)  \geq A_1 (s) - 4\, \pi + \frac{1}{2\, s} \, \int_0^s \left( S_1 (t) + B_2 (t)\right)  \, dt \notag \\
       &\geq 
       - 4\, \pi + \frac{1}{2\, s} \, \int_0^r   S_1 (t)    \, dt 
       \, .
	\end{align}
Thus, we see that
\begin{align}
	r^{-1} \, \int_0^r \int_{b=s} S    \leq \left( \frac{2s}{r} \right) \, \frac{1}{2s} \, \int_0^r S_1 \leq 4 \, (12\, \pi) = 48 \, \pi  \, .
\end{align}
\end{proof}

     \appendix

 \section{Level sets of harmonic functions}		\label{s:app2}
 
 We will recall a few  facts about the level sets of a proper harmonic function $u$ that are used in the paper.  
 The starting point is that Sard's theorem gives that almost every level set is 
 regular and, thus, is a smooth closed hypersurface. 
 
Section $2$ of \cite{CM1} developed the basics  of weighted averages over level sets of harmonic functions using estimates from \cite{Ch,HS}; cf. 
\cite{CNV,HS,Ln} and proposition $26$ in \cite{ChL}.
In particular,  the integrals of a continuous function over the level sets vary continuously (cf. \cite{CM1},    lemma $11$ in \cite{ChL}):
 
\begin{quote}
 If $f$ is a continuous function on $M \setminus \{ p \}$, then $\int_{b=s} f$ is continuous in $s$.  
\end{quote}

The argument  gives a slightly stronger conclusion.  It is not necessary for $f$ to be defined on the critical set where $|\nabla u| = 0$, as long as $f$ is continuous off of this set and uniformly bounded on compact subsets of $M \setminus \{ p \}$.
  In particular,  the functions $A_1(s), B_1(s)$ and $S_1(s)$ are all continuous in $s$.  
 
 More is true for $A_1(s)$.  Namely, using the divergence theorem and co-area formula, 
 lemma $12$ in \cite{ChL} shows that $A_1(s)$ is locally Lipschitz in $s$.  Thus, by Lebesgue's theorem (the one-dimensional version of Rademacher's theorem), $A_1(s)$ is differentiable almost everywhere and absolutely continuous (i.e., the fundamental theorem of calculus holds).
 
Finally, if $M$ has one end and the first betti number vanishes, then the level sets of $b$ are connected by a result of 
 Munteanu-Wang \cite{MW1} (see Lemma $2.2$ in \cite{MW2}).

\end{document}